\numberwithin{equation}{section}
\newtheorem{thm}{Theorem}[section]
\newtheorem{lem}[thm]{Lemma}
\newtheorem{prop}[thm]{Proposition}
\theoremstyle{definition}
\newtheorem{rem}[thm]{Remark}
\newtheorem*{claim*}{Claim}
\numberwithin{equation}{section}
\newcommand{\R}{{\mathbb R}}
\newcommand{\LC}{\left(}
\newcommand{\RC}{\right)}
\newcommand{\norm}[1]{\lVert #1 \rVert}
\DeclareMathOperator{\supp}{supp} 
\newcommand{\dif}[1]{\,\mathrm{d}{#1}} 
\title[Fractional inverse scattering]{Inverse scattering for the fractional Schr\"{o}dinger equation}
\author{Saumyajit Das}
\address{Harish-Chandra Research Institute, A CI of Homi Bhabha National Institute, Chhatnag Road, Jhunsi, Prayagraj (Allahabad) 211 019, India}
\email{saumyajit.math.das@gmail.com}
\author{Tuhin Ghosh}
\address{Harish-Chandra Research Institute, A CI of Homi Bhabha National Institute, Chhatnag Road, Jhunsi, Prayagraj (Allahabad) 211 019, India}
\email{tuhinghosh@hri.res.in}
\author{Shiqi Ma}
\address{School of Mathematics, Jilin University, Changchun 130012, China}
\email{mashiqi@jlu.edu.cn}
\begin{document}

\begin{abstract}
	This article is devoted to studying the inverse scattering for the fractional Schr\"{o}dinger equation, and in particular we solve the Born approximation problem. Based on the ($p$,$q$)-type resolvent estimate for the fractional Laplacian, we derive an expression for the scattering amplitude of the scattered solution of the fractional Schr\"{o}dinger equation. We prove the uniqueness of the potential using the scattering amplitude data.
	
	\medskip
	
	\noindent{\bf Keywords:}~~Inverse problems, fractional Schr\"{o}dinger equation, Born approximation, unique recovery.
	
	\noindent{\bf 2020 Mathematics Subject Classification:}~~35R30, 35R11, 35P25, 35Q40, 47G30.
	
\end{abstract}

\maketitle


\section{Introduction}

\subsection{Problem setting and the results}

The inverse problem of finding a potential from its scattering amplitude is a subject of interest in mathematical physics and related fields due to its practicality. The mathematical richness of the topic is also compelling. For a detailed study, we refer the readers to \cites{GE, cakoni2006qualitative, colton1998inverse, isozaki2014recent, uhlmann2000inverse}.  In this article, we are concerned with the potential associated with the fractional Laplacian $(-\Delta)^s$ for $s > 0$. Various aspects of the fractional Laplacian \cite{CS07}, particularly for understanding Levy processes \cite{BSW} random walk analogies, and including the associated inverse problems \cite{GSU20} have been extensively studied in recent years.

There are several different definitions of $(-\Delta)^s$, see e.g.~\cite{kwasnicki2017ten}, some of which are not equivalent.
In this work, we adopt the definition by the Fourier transform $\mathcal F^{-1} |\xi|^{2s} \mathcal F$, and study inverse scattering problems related to $(-\Delta)^s$.
Hereafter, $\mathcal F$ and $\mathcal F^{-1}$ signify the Fourier and inverse Fourier transform, respectively.
We shall specify the domain of the definition of $(-\Delta)^s$ later in Section \ref{subsec:Ds}.

Let $d \geq 2$ be the dimension, $k > 0$ be the wave number, and $V \in L^{\infty}(\mathbb{R}^d)$ with compact support.
Assume $u$ satisfies the following fractional Schr\"{o}dinger equation: 
\begin{equation} \label{eq:1}
	\big( (-\Delta)^s - k^{2s} - V(x) \big) u = 0 \quad \mathrm{in} \quad \mathbb{R}^d.
\end{equation}
Suppose $k^{2s}$ is not in the point spectrum of $(-\Delta)^s - V$ in $\mathbb R^d$.
To make it meaningful, we excite the system \eqref{eq:1} by certain incident wave $u^{\mathrm{in}}$, for example, we could use the plane wave
\begin{equation} \label{eq:uin}
	u^{\mathrm{in}}(x) = e^{ikx \cdot \theta}, \quad \theta \in \mathbb{S}^{d-1}.
\end{equation}
It has been checked in \cite{LLM21}*{Lemma 3.1} that the plane wave $e^{ikx \cdot \theta}$ satisfies \eqref{eq:1} with $V = 0$ in the distributional sense.
One may also consider the Herglotz wave function defined as the linear superposition of plane waves with a density function $g \in L^{2}(\mathbb{S}^{d-1})$:
\begin{equation*} 
\int_{\mathbb{S}^{d-1}} e^{\mathsf{i}kx\cdot \theta} g(\theta) \dif s(\theta)
\end{equation*}
as the incident field $u^{\mathrm{in}}$.
The presence of the potential $V$ will generate a corresponding scattered wave
\begin{equation} \label{eq:usc}
u^{\mathrm{sc}} := u - u^{\mathrm{in}}.
\end{equation}
We define the outgoing scattered field by the outgoing limiting resolvent. More precisely, for a source term $f$ generated by the interaction with the potential, the free outgoing resolvent is understood as
\begin{equation} \label{eq:Rks}
	\mathcal R^s_{k^{2s}+\mathrm{i}0} f
	:= \lim_{\varepsilon\to 0^+} \big((-\Delta)^s-(k^{2s}+\mathrm{i}\varepsilon)\big)^{-1} f.
\end{equation}
The word outgoing here and below refers to this limiting absorption prescription.
Using \eqref{eq:Rks}, we can turn \eqref{eq:1} into the following Lippmann-Schwinger equation, and when $k$ is large enough, it admits a unique solution:
\begin{equation} \label{eq:LiSc}
	(I - \mathcal R^s_{k^{2s}+\mathrm{i}0} V) u^{\mathrm{sc}}
	= \mathcal R^s_{k^{2s}+\mathrm{i}0} (V u^{\mathrm{in}}).
\end{equation}

Regarded as an integral operator, the outgoing limiting resolvent $\mathcal R^s_{k^{2s}+\mathrm{i}0}$ has a kernel which is a fundamental solution of $((-\Delta)^s - k^{2s}) u = 0$ in $\R^d$, called the outgoing fundamental solution.
Denoted by $\Phi_s(x)$ the outgoing fundamental solution, the scattered field $u^{\mathrm{sc}}$ and the total field $u$ can be linked through 
\begin{equation} \label{1.5}
u^{\mathrm{sc}}(x)
= \int_{\mathbb{R}^d} \Phi_s(x-y) V(y) u(y) \dif y,
\end{equation}
which is equivalent to \eqref{eq:LiSc}.
For the case $0 < s < 1$, the asymptotics of the fundamental solution $\Phi_s(x)$ as $|x| \to +\infty$ in dimensions $d=1,2,3$ have been obtained in the recent work of Zilberberg, Cakoni and Vogelius \cite{Zilberberg2026}.

Combining \eqref{eq:1} with the outgoing limiting resolvent prescription, we obtain the fractional Schr\"odinger scattering problem considered in this paper.
The well-posedness of the forward problem is summarized in our first result.
As in the classical case, the scattered wave $u^{\mathrm{sc}}$ enjoys a decaying rate and the notion of the scattering amplitude $u^\infty$ can be defined from the asymptotics of $u^{\mathrm{sc}}$.

\begin{thm} \label{thm:1}
Assume $d \geq 3$, $\frac d {d+1} < s < \frac{d}{2}$, and $V \in L^\infty(\R^d)$ with compact support in domain $\Omega \subset \R^d$.
Let us fix some $1<p<q<\infty$ satisfying
\begin{equation} \label{les_con}
	\frac 2 {d+1} \leq \frac 1 p - \frac 1 q \leq \frac {2s} d, \qquad
	\frac 1 p > \frac {d+1} {2d}, \qquad
	\frac 1 q < \frac {d-1} {2d}.
\end{equation} 
Suppose that $k>k_0>0$ for certain $k_0$ depending on $p$, $q$, $\norm{V}_{L^\infty(\R^d)}$ and $\Omega$, and that $k^{2s}$ is not an eigenvalue of $(-\Delta)^s - V$ in $\mathbb R^d$.
Let $u = u^{\mathrm{in}} + u^{\mathrm{sc}}$ be the outgoing resolvent solution of \eqref{eq:1}, constructed by the limiting resolvent. The corresponding scattered part $u^{\mathrm{sc}}$ is called the outgoing scattered field.
Moreover, we have the following estimate uniform for $k \in (k_0,+\infty)$,
\begin{equation} \label{uni_mt}
	\|u^{\mathrm{sc}}\|_{L^{q}(\mathbb{R}^{d})}
	\leq C_{p,q}(k_0)\,k^{d(\frac{1}{p}-\frac{1}{q})-2s}\|V\|_{L^{\infty}(\R^d)}.
\end{equation}
Furthermore, 
\begin{equation}\begin{aligned}
		u^{\mathrm{sc}}
		\sim \frac {k^{2(1-s)}} s k^{\frac{d-3} 2} \frac {e^{-\frac{\mathsf{i}\pi(d-3)}{4}}} {2^{\frac {d+1} 2} \pi^{\frac {d-1} 2}} \frac{e^{ \mathsf{i}k|x|}}{|x|^{\frac {d-1} 2}}\, u^\infty( k, \hat x,\theta),
\end{aligned}\end{equation}
and $u^\infty(k, \hat x, \theta)$ is the scattering amplitude satisfying
\begin{equation}
	u^\infty(k, \hat x, \theta )
	= \int_{\mathbb{R}^d} e^{-\mathsf{i}k \hat x \cdot y} V(y) \big( e^{\mathrm{i}k y\cdot \theta} + u^{\mathrm{sc}}(y) \big) \,\mathrm{d}y. 
\end{equation}
\end{thm}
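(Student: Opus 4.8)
The plan is to base everything on the Lippmann--Schwinger reformulation of \eqref{1.5}. Writing $R_k$ for the outgoing resolvent $\big((-\Delta)^s - k^{2s} - \mathsf{i}0\big)^{-1}$, so that $\Phi_s$ is its convolution kernel, and using $u = u^{\mathrm{in}} + u^{\mathrm{sc}}$, equation \eqref{1.5} becomes the fixed-point identity
\[
(I - R_k V)\, u^{\mathrm{sc}} = R_k\big(V u^{\mathrm{in}}\big),
\]
where $V$ abbreviates multiplication by the compactly supported, bounded potential. The two assertions of the theorem — well-posedness together with the uniform bound \eqref{uni_mt}, and the far-field expansion producing $u^\infty$ — I would treat as two largely independent parts resting on this identity.

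For the first part I would invert $I - R_k V$ on $L^{q}(\R^d)$. The engine is the $(p,q)$-type resolvent estimate $\|R_k f\|_{L^q} \le C_{p,q}(k_0)\, k^{d(1/p-1/q)-2s}\|f\|_{L^p}$ valid under \eqref{les_con}. Since $V$ has compact support, H\"older's inequality with $\frac1r = \frac1p - \frac1q$ gives $\|Vf\|_{L^p}\le \|V\|_{L^r}\|f\|_{L^q}$ and $\|V\|_{L^r}\lesssim\|V\|_{L^\infty}$, so $T_k := R_k V$ is bounded on $L^q$ with $\|T_k\|_{L^q\to L^q}\le C k^{d(1/p-1/q)-2s}\|V\|_{L^\infty}$. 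The hypothesis $s>\frac{d}{d+1}$ is exactly what makes \eqref{les_con} consistent with a \emph{strict} inequality $\frac1p-\frac1q<\frac{2s}{d}$, so the exponent can be taken strictly negative and $\|T_k\|\to 0$ as $k\to\infty$; hence for $k$ large the Neumann series $\sum_n T_k^n$ converges, giving existence, uniqueness, and $\|u^{\mathrm{sc}}\|_{L^q}\le (1-\|T_k\|)^{-1}\|R_k(Vu^{\mathrm{in}})\|_{L^q}$, which after one more application of the resolvent estimate is precisely \eqref{uni_mt}. For intermediate $k>k_0$, where $\|T_k\|$ need not be small, I would instead invoke the Fredholm alternative: $T_k$ is compact on $L^q$ (the compact support of $V$ together with the smoothing of $R_k$ yields a Rellich-type compact embedding), so invertibility of $I-T_k$ is equivalent to triviality of its kernel, which is exactly the hypothesis that $k^{2s}$ is not an eigenvalue of $(-\Delta)^s - V$; the constant $C_{p,q}(k_0)$ absorbs the resulting uniform-in-$k$ bound on $(I-T_k)^{-1}$ down to $k_0$. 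Uniqueness of the full solution then follows since the difference of two solutions solves the homogeneous equation with the radiation condition \eqref{uniq}, and a Rellich-type argument for the fractional Helmholtz operator forces it to be an $L^2$ eigenfunction, contradicting the spectral hypothesis.

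The second part is the genuinely technical one. I would compute the large-$|x|$ asymptotics of $\Phi_s(x) = (2\pi)^{-d}\int_{\R^d}\big(|\xi|^{2s}-k^{2s}-\mathsf{i}0\big)^{-1} e^{\mathsf{i}x\cdot\xi}\dif\xi$ in polar coordinates. The singularity sits on the sphere $|\xi|=k$; linearizing $|\xi|^{2s}-k^{2s}\approx 2s\,k^{2s-1}(|\xi|-k)$ there produces the residue factor $1/(2s\,k^{2s-1})$, the source of the fractional modification $k^{2(1-s)}/s$ relative to the classical kernel. The angular integral over $S^{d-1}$ contributes, via stationary phase at $\pm\hat x$ (equivalently the Bessel asymptotics), the surface factor $k^{(d-1)/2}$, the decay $|x|^{-(d-1)/2}$, and the phase $e^{-\mathsf{i}\pi(d-3)/4}$, while the $-\mathsf{i}0$ prescription selects the outgoing branch $e^{\mathsf{i}k|x|}$. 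Tracking all constants through the change of variables yields
\[
\Phi_s(x)\sim \frac{k^{2(1-s)}}{s}\,k^{\frac{d-3}2}\,\frac{e^{-\frac{\mathsf{i}\pi(d-3)}4}}{2^{\frac{d+1}2}\pi^{\frac{d-1}2}}\,\frac{e^{\mathsf{i}k|x|}}{|x|^{\frac{d-1}2}},
\]
whose $k$-power $\tfrac{d-4s+1}{2}$ is consistent with the claimed prefactor.

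Finally, substituting this into \eqref{1.5} and using $|x-y| = |x| - \hat x\cdot y + O(|x|^{-1})$ for $y\in\supp V$, so that $e^{\mathsf{i}k|x-y|}|x-y|^{-(d-1)/2}\sim e^{\mathsf{i}k|x|}|x|^{-(d-1)/2}e^{-\mathsf{i}k\hat x\cdot y}$, I would factor out the common radial profile and identify the remaining integral as $u^\infty$; since $u = e^{\mathsf{i}ky\cdot\theta}+u^{\mathrm{sc}}$ this gives
\[
u^\infty(k,\hat x,\theta) = \int_{\R^d} e^{-\mathsf{i}k\hat x\cdot y}\,V(y)\,\big(e^{\mathsf{i}ky\cdot\theta}+u^{\mathrm{sc}}(y)\big)\dif y,
\]
matching the asserted formula. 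The main obstacle is making the stationary-phase expansion of $\Phi_s$ rigorous and uniform in $k>k_0$ — isolating the leading singular contribution on $|\xi|=k$ from the regular part of the symbol and tracking every constant — and then justifying the interchange of this expansion with the $y$-integration against $Vu$, for which the $L^q$ control of $u^{\mathrm{sc}}$ from the first part is exactly the ingredient needed.
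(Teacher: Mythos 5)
Your first half follows essentially the same route as the paper: the paper also treats well-posedness through the perturbed resolvent $(I-V\mathcal R^s_\lambda)^{-1}$ built on the Huang--Yao--Zheng bound \eqref{resol_frac_2} (Propositions \ref{prop5} and \ref{resolvent_frac-schro2}), with exactly your H\"older step $\|Vf\|_{L^p}\le \|V\|_{L^r}\|f\|_{L^q}$, $\frac1r=\frac1p-\frac1q$, a Neumann series for $k$ beyond a $\|V\|_{L^\infty}$-dependent threshold, and uniqueness delegated to the radiation condition \eqref{uniq_frac} via Shen--Zhang (Proposition \ref{local22}). Two remarks there: your Fredholm patch for intermediate $k\in(k_0,\lambda_0^{1/2s}]$ goes beyond what the paper actually writes (its Proposition \ref{resolvent_frac-schro2} and Theorem \ref{dir_pb} only cover large $k$), but it rests on an unproven compactness claim for $R_kV$ on $L^q$ and on identifying the kernel of $I-R_kV$ with $L^2$ eigenfunctions, which for the fractional operator needs the full radiation-condition machinery; and at the endpoint $\frac1p-\frac1q=\frac{2s}{d}$ allowed by \eqref{les_con} the exponent $d(\frac1p-\frac1q)-2s$ vanishes, so $\|R_kV\|$ does not decay in $k$ and the smallness argument requires the strict inequality you assume --- a caveat your proposal shares with the paper's own proof of Proposition \ref{resolvent_frac-schro2}. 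Where you genuinely diverge is the asymptotics of $\Phi_s$: you propose a direct stationary-phase analysis of the oscillatory integral with symbol $(|\xi|^{2s}-k^{2s}-\mathsf{i}0)^{-1}$, whereas the paper invokes the Balakrishnan-type identity \eqref{decomposition} of Mart\'inez--Sanz, writing $\Phi_{\pm,s}=\frac{k^{2(1-s)}}{s}\Phi_{\pm,1}+(\text{correction})$ and killing the correction by the pointwise bound $C|x-y|^{-d-2s}$ from heat-kernel estimates, so that the fractional prefactor $\frac{k^{2(1-s)}}{s}$ comes for free from the identity rather than from your residue ratio $2k/(2sk^{2s-1})$ (the two agree), and everything reduces to the classical asymptotics \eqref{31asy} of $\Phi_{\pm,1}$. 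Your route is self-contained and would in principle give a pointwise expansion, but its hard step --- which you flag without executing --- is precisely the non-smoothness of $|\xi|^{2s}$ at $\xi=0$, whose contribution decays only polynomially, like $|x|^{-d-2s}$, i.e.\ it is exactly the paper's correction term; isolating the sphere $|\xi|=k$ uniformly in $k>k_0$ against this background is the real work, and it is also why the paper's $\sim$ is only the $L^2$-averaged relation \eqref{decay} rather than a pointwise one. After that point the two proofs coincide: both substitute the asymptotics into \eqref{1.5} and use $|x-y|=|x|-\hat x\cdot y+O(|x|^{-1})$ on $\supp V$ to extract \eqref{sc_amp_3}, with the $L^q$ bound on $u^{\mathrm{sc}}$ justifying the passage under the $y$-integral.
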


Note that the fractional Laplacian is defined using Fourier transform, and the resolvent estimates are based on $L^p$ for $p \neq 2$.
We give a proper definition of $(-\Delta)^s$ in $\mathcal S'(\R^d)$ using the Fourier transform which is based on $L^2$.
These $L^p(\R^d)$ spaces are subspaces of $\mathcal S'(\R^d)$, so these arguments are properly reasoned.

The estimate \eqref{uni_mt} in Theorem \ref{thm:1} can be regarded as a generalization of the pioneering work \cite{KRS} by Kenig, Ruiz and Sogge that established the strong $(p,q)$-type bound of the classical resolvent.
Later on, Guti\'errez in \cite{GuSu} gave a decaying rate with respect to the frequency parameter of the resolvent.

It is important to note that the scattering amplitude depends on three variables, namely the triplet $(k, \hat{x}, \theta)\in \mathbb{R}_{>0}\times \mathbb{S}^{d-1}\times\mathbb{S}^{d-1}$. The question is: Is it necessary to vary all three factors in order to recover the potential function, or is it sufficient to vary only some of them? This gives rise to several different variants of the problem in the local case $s=1$.  These problems can be similarly formulated in the fractional case as well. We now mention a particular variant of the problem in the local case. The  problem corresponds to the case where all three variables — $k$, $\hat{x}$, and $\theta$ — are varied. That is, the goal is to recover the potential from knowledge of the amplitude for all 
\[
(k, \hat{x}, \theta) \in \mathbb{R}_{>0} \times \mathbb{S}^{d-1} \times \mathbb{S}^{d-1},
\]
or at least for 
\[
k \geq k_0 \quad \text{with} \quad k_0 \in \mathbb{R}_{>0}, \quad \text{and} \quad (\hat{x}, \theta) \in \mathbb{S}^{d-1} \times \mathbb{S}^{d-1}.
\]  
This problem is also known as the Born approximation problem and can be related to the Borg-Levinson problem  for $s=1$.
The problem concerns the recovery of the potential from the spectral data set and the Neumann boundary values of the eigenfunctions, that is, the normal derivatives of the eigenfunctions on the boundary.
In the local case, the authors in \cite{nachman1992inverse} transformed the Borg--Levinson problem into a scattering problem and showed that the scattering amplitude uniquely determines the potential.
Since the Neumann derivatives of the eigenfunctions determine the \emph{Dirichlet-to-Neumann} map associated with the Dirichlet operator involving the potential, they also determine the Neumann boundary values of the solution corresponding to a prescribed Dirichlet boundary condition.
The eigenvalues, together with the Dirichlet and Neumann boundary data, constitute a data set of dimension \(1 + (d - 1) + (d - 1)\), from which the potential can be uniquely determined.
For the local case, this is analogous to the scattering case, where the scattering amplitude is known for \((k, \hat{x}, \theta) \in \mathbb{R}_{>0} \times \mathbb{S}^{d-1} \times \mathbb{S}^{d-1}\), that is, a data set of dimension \(1 + (d - 1) + (d - 1)\), from which the potential can be uniquely determined.
For a comprehensive study of the Borg-Levinson problem we refer the readers to \cites{borg1946umkehrung, levinson1949inverse, nachman1988n, novikov1988multidimensional, soccorsi:hal-03571903, gel1951determination}.

The fractional Borg–Levinson problem can be formulated in a similar manner, where the \emph{Dirichlet-to-Neumann} map is well defined, as established in \cites{grubb2015fractional, grubb2018green}. In \cite{das2025fractional}, the authors determined the potential uniquely from the spectral data using a density argument and the boundary control techniques as in \cite{biccari2025boundary}.
Here also the dimension of the known data set is \(1+(d-1)+(d-1)\) which is exactly the same as the dimension of the known data set for the scattering problem. In this article, analogous to the local case, we aim to build a bridge between the inverse scattering problem and the fractional Borg–Levinson problem; specifically, we investigate whether the scattering amplitude uniquely determines the potential. Although our results require a specific lower bound on the fractional exponent, we believe that the result holds for all fractional exponents strictly greater than \(\frac{1}{2}\), as is the case in the fractional Borg--Levinson problem. The main result in our article is the following:

\begin{thm} \label{thm:2}
Assume $d \geq 3$, $\frac d {d+1} < s < \frac{d}{2}$, and $V_j \in L^\infty(\R^d)$ with compact support $\Omega \subset \mathbb{R}^d$ for $j=1,2$.
Suppose that $k>k_0>0$ and that $k^{2s}$ is not in the point spectrum of $(-\Delta)^s - V_1$ and $(-\Delta)^s - V_2$ in $\mathbb R^d$.
Let $u_j~(j=1,2)$ be outgoing resolvent solutions of \eqref{eq:1} corresponding to potential $V_j$, respectively.
Suppose for all $(k, \hat x, \theta) \in (k_0,+\infty) \times \mathbb{S}^{d-1} \times \mathbb{S}^{d-1}$, there hold
\begin{equation}
	u^{\infty}_1(k, \hat x, \theta)= u^{\infty}_2(k, \hat x, \theta). 
\end{equation}
Then $V_1=V_2$.
\end{thm}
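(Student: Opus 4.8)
The plan is to reconstruct each potential through its Fourier transform by extracting the high-frequency (Born approximation) limit of the scattering amplitude. Starting from the representation of $u^\infty$ furnished by Theorem~\ref{thm:1} and expanding the factor $e^{-\mathsf i k\hat x\cdot y}\big(e^{\mathsf i k y\cdot\theta}+u^{\mathrm{sc}}_j\big)$, I would write
\[
u^\infty_j(k,\hat x,\theta)
= \int_{\R^d} e^{\mathsf i k y\cdot(\theta-\hat x)}\,V_j(y)\dif{y}
+ \int_{\R^d} e^{-\mathsf i k\hat x\cdot y}\,V_j(y)\,u^{\mathrm{sc}}_j(y)\dif{y},
\]
thereby separating the contribution of the incident wave (the \emph{Born term}) from that of the scattered wave (the \emph{correction term}). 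With the Fourier convention $\fourier V(\eta)=\int e^{-\mathsf i y\cdot\eta}V(y)\dif{y}$, the Born term is exactly $\fourier V_j\big(k(\hat x-\theta)\big)$.

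Fix an arbitrary $\xi\in\R^d\setminus\{0\}$. First I would note that whenever $k\ge |\xi|/2$ there exist $\hat x,\theta\in\mathbb S^{d-1}$ realizing $k(\hat x-\theta)=\xi$: the set of differences of two unit vectors is precisely the closed ball of radius $2$, and solving $|\theta+\xi/k|^2=1$ amounts to the single linear constraint $\theta\cdot(\xi/k)=-|\xi|^2/(2k^2)$, which intersects $\mathbb S^{d-1}$ as soon as $|\xi|/k\le 2$. Along any such family of configurations the Born term equals the constant $\fourier V_j(\xi)$ for \emph{every} admissible $k$, so only the correction term varies nontrivially with $k$.

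The heart of the argument is to show that the correction term vanishes as $k\to+\infty$. Since $V_j$ is bounded with compact support, Hölder's inequality gives
\[
\Big|\int_{\R^d} e^{-\mathsf i k\hat x\cdot y}\,V_j(y)\,u^{\mathrm{sc}}_j(y)\dif{y}\Big|
\le \norm{V_j}_{L^{q'}(\supp V_j)}\,\norm{u^{\mathrm{sc}}_j}_{L^{q}(\R^d)},
\]
with $q'$ the conjugate exponent of $q$ and the first factor finite. The uniform resolvent estimate \eqref{uni_mt} then bounds the right-hand side by $C\,k^{d(\frac1p-\frac1q)-2s}$. This exponent is negative precisely because of the standing hypothesis $s>\frac{d}{d+1}$: taking $\frac1p-\frac1q$ at the admissible endpoint $\frac2{d+1}$ makes it $\frac{2d}{d+1}-2s<0$. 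This is the step I expect to be the main obstacle, as it is exactly where the lower bound on $s$ is consumed and where the $(p,q)$ estimate does the essential work; for $s\le \frac{d}{d+1}$ the correction need not decay and the method breaks down, which is why the theorem carries this restriction rather than the full range $s>\frac12$.

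Finally, combining the two expansions for $j=1,2$ with the hypothesis $u^\infty_1=u^\infty_2$ and using that the Born terms are the constants $\fourier V_1(\xi)$ and $\fourier V_2(\xi)$, I obtain for each admissible $k$ the identity
\[
\fourier V_1(\xi)-\fourier V_2(\xi)
= \int_{\R^d} e^{-\mathsf i k\hat x\cdot y}V_2(y)u^{\mathrm{sc}}_2(y)\dif{y}
- \int_{\R^d} e^{-\mathsf i k\hat x\cdot y}V_1(y)u^{\mathrm{sc}}_1(y)\dif{y}.
\]
The left-hand side is independent of $k$, while by the previous paragraph the right-hand side tends to $0$ as $k\to+\infty$; hence $\fourier V_1(\xi)=\fourier V_2(\xi)$ for all $\xi\neq 0$. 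Since $V_1,V_2$ are compactly supported and bounded, $\fourier V_1$ and $\fourier V_2$ are continuous, so the identity extends to $\xi=0$, and Fourier inversion yields $V_1=V_2$.
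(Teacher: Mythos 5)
Your proof is correct, and its engine is the same as the paper's: split the amplitude representation \eqref{sc_amp_3} into a Born term $\widehat{V_j}(k(\hat x-\theta))$ plus a correction, kill the correction as $k\to\infty$ using the uniform bound \eqref{uni_mt}, and conclude $\widehat{V_1}=\widehat{V_2}$. Where you genuinely diverge is in how the key orthogonality identity is obtained. The paper takes a detour: it introduces auxiliary functions $w_j$ solving the \emph{local} Helmholtz equation $\Delta w_j+k^2w_j=-V_ju_j$, uses $u^\infty_1=u^\infty_2$ and the asymptotics of $\Phi_{\pm,1}$ to place $w_1-w_2$ in $L^2(\mathbb R^d)$, invokes the classical Rellich lemma (Lemma \ref{Rellich}, via Lemma \ref{lem:w12}) to conclude $w_1-w_2$ is compactly supported, and only then integrates by parts against the plane waves $e^{\mathrm i\rho\cdot x}$, $|\rho|=k$, to reach \eqref{fr_eq}; you get the same identity in one line, since equality of the amplitudes for all $\hat x\in\mathbb S^{d-1}$ already says $\int e^{-\mathrm ik\hat x\cdot y}\big(V_1u_1-V_2u_2\big)(y)\,\mathrm dy=0$ on the whole sphere $|\xi|=k$. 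Your route thus shows the Rellich/auxiliary-Helmholtz machinery is not logically needed for this theorem; what the paper's longer path buys is an explicit parallel with the Nachman-style Borg--Levinson argument and a clear display that only the \emph{local} Rellich lemma is used --- a point the authors care about, since their appendix proves the fractional Rellich lemma fails. Two smaller differences are also worth noting. First, your parametrization of $k(\hat x-\theta)=\xi$ via the hyperplane constraint $\theta\cdot(\xi/k)=-|\xi|^2/(2k^2)$ is a mildly more general version of the paper's choice $\rho=m+l$, $m\cdot l=0$, $\theta=(m-l)/k$, which realizes $\xi=2m$ with $k=\sqrt{|m|^2+|l|^2}\to\infty$ as $|l|\to\infty$; the two are interchangeable. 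Second, the paper constructs an explicit interior pair $(p,q)$ (the formulas involving $3.8s$), whereas you take the admissible endpoint $\frac1p-\frac1q=\frac2{d+1}$ of \eqref{les_con}; this endpoint choice is legitimate --- taking $\frac1q=\frac{d-1}{2d}-\epsilon$ gives $\frac1p=\frac{d-1}{2d}+\frac2{d+1}-\epsilon>\frac{d+1}{2d}$ since $\frac{d-1}{2d}+\frac2{d+1}-\frac{d+1}{2d}=\frac{d-1}{d(d+1)}>0$, and $\frac2{d+1}\le\frac{2s}d$ holds as $s>\frac d{d+1}$ --- and it yields the decay exponent $\frac{2d}{d+1}-2s<0$ exactly when $s>\frac{d}{d+1}$, correctly identifying, as the paper also remarks, where the lower bound on $s$ is consumed. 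Your H\"older step is sound, and the bound \eqref{uni_mt} is uniform in the incident direction $\theta$ (the incident plane wave has modulus one), so letting $k\to\infty$ with $\hat x,\theta$ varying causes no trouble.
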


\begin{rem}
	The restriction $d\ge 3$ comes from the method of proof. In particular, we use the $L^p$-$L^q$ resolvent estimates for
	the fractional Helmholtz operator in the form available in \cite{HYZ}, which are stated for dimensions $d\ge 3$. Our
	choice of admissible exponents and the decay argument in the uniqueness proof are based on this framework. We expect
	that an analogue in dimension $d=2$ may also hold, but it would require a separate analysis of the resolvent estimates
	and the far-field asymptotics, which is beyond the scope of the present paper.
\end{rem}

\begin{rem}
	The range $\frac{d}{d+1}<s<\frac d2$ is technical and comes from the $L^p$-$L^q$ resolvent estimates used in the
	proof. The lower bound is essential for our argument, since we need admissible exponents $p,q$ such that
	\[
	\frac d2\Big(\frac1p-\frac1q\Big)-s<0,
	\]
	while for $0<s<\frac{d}{d+1}$ the required uniform resolvent estimates fail; see \cite{HYZ}. We expect the uniqueness
	result to remain true in a larger range, possibly for all $s>\frac12$, but this would require different techniques.
\end{rem}

The potential recovery using far-field data in classical case has been studied, see e.g.~\cite{liu2015determining}.
In the fractional case, most of the work is about the fractional Calder\'on problem with boundary measurement, for example, \cites{GSU20, ghosh2021non, GRSU20, bhattacharyya2021inverse, GU2021calder} and references therein.
In this work, we study the inverse scattering problem, with the measurement being the far-field pattern.
Furthermore, in the system we study, there is a frequency parameter $k^{2s}$, which makes the
behavior of the solution different from that of the fractional Calder\'on problem.
As we finalize the paper, we notice the recent work \cite{uhlmann2025recovering} that deals with the $s = 1/2$ case with scattering matrix as their data.
They can recover the potential in the radial case up to some smooth error.
Compared to their work, our approach relies on a $(p,q)$-type estimate for the fractional resolvent, and our unique recovery result is precise.

\subsection{Inverse scattering and Heisenberg-\texorpdfstring{$\mathcal{S}$}{S} matrix}

The Helmholtz operator can be viewed as the Laplace transform of the wave operator. Let us denote the full Hamiltonian by
\[
H_{V}= H_{0}+V, \ \ \mbox{where} \ H_0=\frac{1}{2}\Delta.
\]
Due to the asymptotic completeness of the Schr\"odinger  operator for the local case \(s=1\), i.e., $i\partial_t+\Delta$, the following wave operators are well defined
\[
W^{\pm}= \lim\limits_{t\to \pm \infty} e^{itH_{V}} e^{-itH_{0}}.
\]
Thus one can define the scattering operator (known to be Heisenberg-$\mathcal{S}$ matrix) $\mathcal{S}:L^2(\mathbb{R}\times\mathbb{S}^{d-1})\to L^2(\mathbb{R}\times\mathbb{S}^{d-1})$ as:
\[
\mathcal{S}= \left(W^+\right)^* W^{-}.
\]
Through Lax--Phillips scattering theory, one can show that the operator \( \mathcal{S} - \mathcal{I} \) is compact and can be represented by a scattering kernel via a specific Radon transform. Here $\mathcal{I}$ is the identity operator. Moreover, this kernel is further related to the scattering amplitude through a Fourier-type transform. The precise relations are given by
(\cites{GE, uhlmann2000inverse})
\begin{equation} \label{matrix-amplitude}
\left \{
\begin{aligned}
	(\mathcal{S}-\mathcal{I}) f(s,\hat{x}) & = \int_{\mathbb{R}} \int_{\mathbb{S}^{d-1}} k (s-s',\hat{x},\theta) f(s',\theta)\, \rm{d} \theta \, \rm{d} s' \\
	u^{\infty} (k,\hat{x},\theta) & = c_d k^{d-3} \int_{\mathbb{R}} e^{ik\lambda} k (\lambda,\hat{x}, \theta)\, \rm{d} s,
\end{aligned}
\right.
\end{equation}
where $k(s,\hat{x},\theta)$ is the scattering kernel and $c_d$ is a constant. Therefore, in the local case, the equality of the Heisenberg \( \mathcal{S} \)-matrices is equivalent to the equality of the scattering kernels, which in turn is equivalent to the equality of the scattering amplitudes. Therefore, determining the potential via the Heisenberg-\( \mathcal{S} \) matrix is equivalent to determining it from the scattering amplitude. For a comprehensive study we refer the readers to \cites{ER95,GE,AgSh}. For the local case, an analogous version of the problem in the Riemannian manifold setting can be found in \cite{uhlmann2000inverse}, where the author recovers the geometry of the manifold and establishes the equivalence between studying the Heisenberg-\( \mathcal{S} \) matrix and the scattering amplitude for determining the manifold’s geometry. 

The same problem can be extended to the fractional settings. Here also the wave operators are well defined:
\[
W_{s}^{\pm}= s-\lim\limits_{t\to\pm \infty} e^{itH_V} e^{-itH_{0}}, \ \ \mbox{where} \ H_{0}=\frac{1}{2s} (-\Delta)^{2s} \ \mbox{and} \ H_V=H_{0}+V.
\]
For asymptotic completeness and well definedness of the wave operators, we refer the readers to \cites{kitada2010scattering, kitada2011remark}. Thus we can define the scattering operator $\mathcal{S}_{s}: L^2(\mathbb{R}\times\mathbb{S}^{d-1})\to L^2(\mathbb{R}\times\mathbb{S}^{d-1})$ as follows
\[
\mathcal{S}_{s}= \left(W_s^+\right)^{*} W_s^{-}.
\]
In \cite{MR4058699}, the author established that the Heisenberg-$\mathcal{S}$ matrix or the scattering operator uniquely determines the potential when the exponent \( s \in \left(\frac 12, 1\right)\). This result was later extended to the case \( s = \frac{1}{2} \) in \cite{uhlmann2025recovering}. In this article, we determine the potential from the scattering amplitude. Unlike the local case \( s = 1 \), the equivalence between studying the Heisenberg-\( \mathcal{S} \) matrix and the scattering amplitude (the relation \eqref{matrix-amplitude} for the local case) is not yet known for the case of a fractional exponent.

\subsection{Some further comments and open problems}

In this part, we would like to mention some other different variants of the scattering problems. Is it possible to determine the unknown potential uniquely from strictly less than $2d-1$ dimensional data? We mention some of the problems here. All the problems are still unresolved in the fractional exponent case.
\begin{itemize}

\item \textbf{Fixed frequency problem (\(2d-2\) dimensional data):} \\
Let $k>0$ be fixed. Is it possible to determine the potential uniquely from the scattering amplitude $u^{\infty}$ for all \((\hat{x},\theta) \in \mathbb{S}^{d-1} \times \mathbb{S}^{d-1}\)? \\
This problem is solved for the local case \(s=1\) in \cites{GE, lax2014functional}.
The techniques used for the local case heavily depend on the Rellich lemma and analyticity of the polynomial \(\xi^{2s}-k^{2s}\), when $s=1$. These techniques fail when $s$ is a fractional quantity. We discuss this in detail in the appendix of this article.

\item \textbf{Fixed angle problem ($d$ dimensional data):} \\
Let $\hat{x}$ be fixed. Is it possible to determine the potential uniquely from the scattering amplitude $u^{\infty}$ for all \((k,\theta)\in \mathbb{R}_{>0}\times\mathbb{S}^{d-1}\)?
Simple algebraic computation leads to the following identity
\[
u^{\infty} (k,\hat{x},\theta)=u^{\infty} (k,-\hat{x},-\theta).
\]
Hence, fixing $\hat{x}\in\mathbb{S}^{d-1}$ is the same as fixing $\theta\in\mathbb{S}^{d-1}$, and the latter is known as the single measurement problem.

\item  \textbf{Back-scattering problem ($d$ dimensional data):}\\
Is it possible to determine the potential uniquely from the scattering amplitude $u^{\infty}(k,-\theta,\theta)$ for all $(k,\theta)\in\mathbb{R}_{>0}\times\mathbb{S}^{d-1}$?
\end{itemize}
Even for the local case the last two problems are open in general. The one dimensional version for the local case can be found in \cites{deift1979inverse, marchenko2011sturm}.  For various results and comprehensive study we refer the readers to 
\cites{uhlmann2001time, barcelo2020uniqueness, eskin1992inverse, uhlmann2014uniqueness, Rakesh_2020, MR3372472, MR4170189}.

\section{Resolvent estimate and scattering amplitude}

\subsection{Domain of the fractional Laplacian} \label{subsec:Ds}

Throughout the work we use the following convention for the Fourier transforms:
\[
\mathcal F f(\xi) := (2\pi)^{-\frac d 2} \int_{\R^d} e^{-\mathsf{i}x\cdot \xi} f(x) \dif x, \
\mathcal F^{-1} f(x) := (2\pi)^{-\frac d 2} \int_{\R^d} e^{\mathsf{i}x\cdot \xi} f(x) \dif x,
\]
We may denote $\mathcal F f$ as $\hat f$ for short.
In this work, the fractional Laplacian $(-\Delta)^s$ is defined by Fourier transform as $\mathcal F^{-1} |\xi|^{2s} \mathcal F$,
\begin{equation*}
(-\Delta)^s f := \mathcal F^{-1} |\xi|^{2s} \mathcal F f.
\end{equation*}
Due to the non-smoothness of $|\xi|^{2s}$ at $\xi = 0$, $(-\Delta)^s$ does not map the Schwartz space into itself.
To see that, we choose $\varphi \in \mathcal S(\R^d)$ and compute $x^\alpha D^\beta (-\Delta)^s \varphi$:
\begin{align}
x^\alpha D_x^\beta (-\Delta)^s \varphi(x)
& \simeq x^\alpha D_x^\beta \int_{\R^d} e^{\mathsf{i}x\cdot \xi} |\xi|^{2s} \hat \varphi(\xi) \dif \xi
= x^\alpha \int_{\R^d} e^{\mathsf{i}x\cdot \xi} \xi^\beta |\xi|^{2s} \hat \varphi(\xi) \dif \xi \nonumber \\
& = \int_{\R^d} D_\xi^\alpha (e^{\mathsf{i}x\cdot \xi}) \xi^\beta |\xi|^{2s} \hat \varphi(\xi) \dif \xi. \label{eq:absv}
\end{align}
For the last integration, it is not guaranteed that the integration by parts can always be performed for every multi-index $\alpha$ and $\beta$, thus $(-\Delta)^s \varphi(x)$ might not be decaying faster than polynomials, so the claim follows.
As a byproduct, we can conclude from the calculation above that
\[
(-\Delta)^s \colon \mathcal S(\R^d) \to C^\infty(\R^d).
\]

However, the dual space of $C^\infty(\R^d)$, which is the set of distributions with compact support, is not big enough to have useful distributions contained in it, such as the plane wave $e^{\mathsf{i}kx\cdot \theta}$.
To fix this, we shrink the domain to the following subset of $\mathcal S(\R^d)$, called {\it restricted Schwartz space},
\begin{equation*}
\mathcal S_0(\R^d)
:= \big\{ \varphi \in \mathcal S(\R^d) \,;\, \partial^\alpha \hat \varphi(0) = 0 \ \text{for every multi-index}~ \alpha \big\}.
\end{equation*}
The following result shows the restricted Schwartz space is small enough such that it is a good test function space for the fractional Laplacian $(-\Delta)^s$, but is not too small.

\begin{prop}
$\mathcal S_0(\R^d)$ is dense in $L^2(\R^d)$.
Moreover, the map
\begin{equation} \label{eq:DsD}
	(-\Delta)^s \colon \varphi \in \mathcal S_0(\R^d) \ \mapsto \ \mathcal F^{-1} |\xi|^{2s} \mathcal F \varphi \in \mathcal S_0(\R^d).
\end{equation}
is well-defined in $\mathcal S_0(\R^d)$, and maps $\mathcal S_0(\R^d)$ into itself.
\end{prop}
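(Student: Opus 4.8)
The plan is to treat the two assertions separately, with almost all the work concentrated in the mapping property. For the density statement I would pass to the Fourier side: since $\mathcal F$ is a unitary bijection of $L^2(\R^d)$, a subspace is dense in $L^2(\R^d)$ precisely when its image under $\mathcal F$ is. The image $\mathcal F(\mathcal S_0(\R^d))$ is exactly the set of Schwartz functions all of whose derivatives vanish at the origin. Every $\psi \in C_c^\infty(\R^d \setminus \{0\})$ belongs to this set (it is Schwartz and identically zero near $0$), and such functions are already dense in $L^2(\R^d)$ because $\{0\}$ is a null set. Hence $\mathcal F(\mathcal S_0(\R^d))$ is dense, and applying the homeomorphism $\mathcal F^{-1}$ shows $\mathcal S_0(\R^d)$ is dense in $L^2(\R^d)$.

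For the mapping property, fix $\varphi \in \mathcal S_0(\R^d)$ and set $g(\xi) := |\xi|^{2s}\hat\varphi(\xi) = \mathcal F\big((-\Delta)^s \varphi\big)(\xi)$. The key reduction is that it suffices to show $g \in \mathcal S(\R^d)$ and that $g$ vanishes to infinite order at $0$: once this holds, $(-\Delta)^s\varphi = \mathcal F^{-1} g$ is Schwartz and its Fourier transform $g$ annihilates all derivatives at the origin, so $(-\Delta)^s\varphi \in \mathcal S_0(\R^d)$, which is exactly the claim. Away from the origin $g$ is smooth, and the Leibniz rule gives $\partial^\alpha g = \sum_{\beta \le \alpha}\binom{\alpha}{\beta}\big(\partial^\beta |\xi|^{2s}\big)\big(\partial^{\alpha-\beta}\hat\varphi\big)$. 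Since $\partial^\beta|\xi|^{2s}$ is positively homogeneous of degree $2s-|\beta|$ off the origin, one has $\big|\partial^\beta |\xi|^{2s}\big| \le C_\beta|\xi|^{2s-|\beta|}$; combined with the rapid decay of the derivatives of $\hat\varphi$, this yields rapid decay of every $\partial^\alpha g$ at infinity, so the behaviour at infinity is harmless.

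The main obstacle is smoothness at the origin, where $|\xi|^{2s}$ is itself only $C^{\lfloor 2s\rfloor}$. Here the defining property of $\mathcal S_0$ does the work: because $\hat\varphi$ vanishes to infinite order at $0$, so does each $\partial^{\alpha-\beta}\hat\varphi$, giving $\big|\partial^{\alpha-\beta}\hat\varphi(\xi)\big| \le C_N|\xi|^N$ for every $N$. Inserting this into the Leibniz expansion yields $\big|\partial^\alpha g(\xi)\big| \le C_{\alpha,N}\,|\xi|^{2s+N-|\alpha|}$ for $\xi \ne 0$, so for $N$ chosen large each such derivative tends to $0$ as $\xi \to 0$. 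I would then upgrade this to genuine $C^\infty$-smoothness across the origin by induction on $|\alpha|$, using the elementary fact that a function continuous near $0$ and differentiable away from $0$, whose first partials extend continuously across $0$, is itself differentiable at $0$ with those limiting values. Since all the limiting values here are $0$, the same induction simultaneously shows that $g$ and all its derivatives vanish at the origin. This establishes both Schwartz membership of $g$ and its infinite-order vanishing, and with the reduction above it completes the proof.
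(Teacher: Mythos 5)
Your proof is correct, and while the two halves differ in how closely they track the paper, the density argument takes a genuinely different route. For density, the paper constructs explicit approximants $f_j := \mathcal F^{-1}\chi_j \mathcal F(\chi_j f)$ with dyadic annular cutoffs $\chi_j$ (the inner cutoff makes $\chi_j f$ compactly supported, so that $\mathcal F(\chi_j f)$ is smooth; the outer one forces $\hat f_j$ to be compactly supported and to vanish near the origin) and then proves $f_j \to f$ in $L^2$ by a Plancherel computation and dominated convergence; you instead transport the problem through the unitary $\mathcal F$ and quote the density of $C_c^\infty(\R^d \setminus \{0\})$ in $L^2(\R^d)$, which is shorter and hides the approximation inside a standard fact, at the cost of not producing concrete approximants. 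For the mapping property, both arguments run on the same engine --- the infinite-order vanishing of $\hat\varphi$ at $0$ absorbs the non-smoothness of $|\xi|^{2s}$ --- but you execute it entirely on the Fourier side, showing directly that $g = |\xi|^{2s}\hat\varphi$ is Schwartz and flat at the origin via the Leibniz rule, the homogeneity bound $\lvert\partial^\beta |\xi|^{2s}\rvert \le C_\beta |\xi|^{2s-|\beta|}$, and an induction upgrading limits of partials to genuine derivatives across $0$; the paper instead works on the physical side, asserting that $\psi(\xi) = \xi^\beta|\xi|^{2s}\hat\varphi(\xi)$ is Schwartz so that integration by parts in the oscillatory-integral computation preceding the proposition bounds $x^\alpha D_x^\beta(-\Delta)^s\varphi$, and gets the flatness of $\mathcal F\{(-\Delta)^s\varphi\}$ at $0$ from the factorization $\hat\varphi(\xi) = |\xi|^{2k}\phi_k(\xi)$. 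Your Leibniz-plus-induction step is precisely the justification the paper leaves implicit (why $|\xi|^{2s}\hat\varphi$, respectively the quotient $\phi_k$, is smooth and flat across the origin), so on that point your write-up is the more complete of the two.
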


\begin{proof}
Fix a sequence of cutoff functions $\chi_j~(j \geq 1)$ that satisfy the following,
\begin{equation*}
	\left\{\begin{aligned}
		& \chi_j(\xi) = 0, && |\xi| \in [0, 2^{-j-1}] \cup [2^{j+1},+\infty), \\
		& \chi_j(\xi) = 1, && |\xi| \in [2^{-j}, 2^j], \\
		& 0 \leq \chi_j(\xi) \leq 1, && \text{otherwise}.
	\end{aligned}\right.
\end{equation*}
For any $f \in L^2(\R^d)$, we define
\begin{equation} \label{eq:fj}
	f_j := \mathcal F^{-1} \chi_j \mathcal F(\chi_j f).
\end{equation}
First, we need to show $f_j$'s are contained in the restricted Schwartz space. To see that, we just need to check $f_j \in \mathcal S(\R^d)$, as the conditions $\partial^\alpha \hat f_j(0) = 0$ are automatically satisfied due to the cutoff $\chi_j$ next to the $\mathcal F^{-1}$ in \eqref{eq:fj}.
$f_j \in \mathcal S(\R^d)$ is equivalent to $\hat f_j \in \mathcal S(\R^d)$, and $\hat f_j = \chi_j \mathcal F(\chi_j f)$, so $\hat f_j$ is compactly supported, thus it is left to check that $\mathcal F(\chi_j f)$ is smooth.
This is also true, because the Fourier transform of a compactly supported $L^2$ function is smooth.
Therefore, we can conclude that $\{f_j\}_{j \geq 1} \subset \mathcal S_0(\R^d)$.

Second, we check $f_j \to f$ in $L^2(\R^d)$.
By the Plancherel theorem we have
\begin{align*}
	\norm{f_j - f}_{L^2}
	& = \norm{\chi_j \mathcal F(\chi_j f) - \hat f}_{L^2} \\
	& \leq \norm{(1 - \chi_j) \mathcal F(\chi_j f)}_{L^2} + \norm{\mathcal F ((1-\chi_j) f)}_{L^2} \\
	& \leq \norm{(1 - \chi_j) \mathcal F((1 - \chi_j) f)}_{L^2} + \norm{(1 - \chi_j) \hat f}_{L^2} + \norm{(1-\chi_j) f}_{L^2}.
\end{align*}
For the first term, because $0 \leq 1 - \chi_j \leq 1$ and the Fourier transform $\mathcal F$ is unitary in $L^2(\R^d)$, so
\[
\norm{(1 - \chi_j) \mathcal F((1 - \chi_j) f)}_{L^2}
\leq \norm{\mathcal F((1 - \chi_j) f)}_{L^2}
\leq \norm{(1 - \chi_j) f}_{L^2},
\]
thus
\begin{equation} \label{eq:fjf}
	\norm{f_j - f}_{L^2}
	\leq 2\norm{(1 - \chi_j) f}_{L^2} + \norm{(1 - \chi_j) \hat f}_{L^2}.
\end{equation}
These two terms converge to zero as $j \to +\infty$ by the dominated convergence theorem.
Hence, we proved $\mathcal S_0(\R^d)$ is dense in $L^2(\R^d)$.

To show the second conclusion, we fix $\varphi \in \mathcal S_0(\R^d)$.
As indicated by \eqref{eq:absv},
\begin{equation*}
	x^\alpha D_x^\beta (-\Delta)^s \varphi(x)
	\simeq \int_{\R^d} D_\xi^\alpha (e^{\mathsf{i}x\cdot \xi}) \xi^\beta |\xi|^{2s} \hat \varphi(\xi) \dif \xi.
\end{equation*}
Because $\partial^\gamma \hat \varphi(0) = 0$ for any $\gamma$, we see $\psi(\xi) := \xi^\beta |\xi|^{2s} \hat \varphi(\xi)$ is a Schwartz function, so we can continue the calculation,
\begin{equation*}
	x^\alpha D_x^\beta (-\Delta)^s \varphi(x)
	\simeq \int_{\R^d} e^{\mathsf{i}x\cdot \xi} D_\xi^\alpha \psi(\xi) \dif \xi.
\end{equation*}
This integral is bounded, so $(-\Delta)^s \varphi$ is a Schwartz function.
Moreover, the condition $\partial^\alpha \hat \varphi(0) = 0$ implies for any positive integer $k$, there exists $\phi_k$ such that $\hat \varphi(\xi) = |\xi|^{2k} \phi_k(\xi)$, so
\(
\partial^\alpha \mathcal F \{ (-\Delta)^s \varphi \} = \partial^\alpha \big( |\xi|^{2(s+|\alpha|)} \phi_{|\alpha|}(\xi) \big),
\)
thus
\[
\partial^\alpha \mathcal F \{ (-\Delta)^s \varphi \}(0) = 0.
\]
Hence, $(-\Delta)^s \varphi \in \mathcal S_0(\R^d)$.
The proof is complete.
\end{proof}

The definition \eqref{eq:DsD} can be extended to the set of distributions by density arguments.
Because $\mathcal S_0(\R^d)$ is a subspace of $\mathcal S(\R^d)$, the dual space of $\mathcal S_0(\R^d)$ contains $\mathcal S'(\R^d)$, so the definition \eqref{eq:DsD} can be extended to $\mathcal S'(\R^d)$ by duality arguments:
\begin{equation} \label{eq:DsD2}
((-\Delta)^s u, \varphi) := (u, (-\Delta)^s \varphi), \quad \text{for} \quad u \in \mathcal S'(\R^d), \ \ \varphi \in \mathcal S_0(\R^d).
\end{equation}
The distribution space $\mathcal S'(\R^d)$ is a subset of the dual space of $\mathcal S_0(\R^d)$, but extending the domain of definition of $(-\Delta)^s$ from $\mathcal S_0(\R^d)$ to $\mathcal S'(\R^d)$ is already enough for our purpose.

\subsection{Fundamental solution}

Let us investigate the fundamental solutions since they are crucial to our analysis.
Let $\Phi_{\pm,s}(x)$ be defined by
\begin{equation}\label{phi}
\Phi_{\pm,\,s}(x) := \lim_{\varepsilon \to 0^+}\Phi_{\pm,\, s,\, \varepsilon}(x),
\end{equation}
where $\Phi_{\pm,\,s,\, \varepsilon}$ are given by
\begin{equation}
\Phi_{\pm,\,s,\, \varepsilon}(x) = (2\pi)^{-d} \int\displaylimits_{\mathbb{R}^d} {\frac{e^{\mathsf{i}x\cdot \xi}}{|\xi|^{2s} - (k\pm \mathsf{i}\varepsilon)^{2s}}\,\mathrm{d}\xi}.
\end{equation}
$\Phi_{\pm,\,s,\, \varepsilon}$ is a fundamental solution of
\begin{equation}
\left[(-\Delta)^s - (k\pm \mathsf{i}\varepsilon)^{2s}\right] \Phi_{\pm,\,s,\, \varepsilon}(x) = \delta_0(x) \quad\text{ for }x\in  \mathbb{R}^d.
\end{equation}
Here $\delta_0$ signifies the Dirac delta measure located at the origin.
To deal with the limit of $(k \pm \mathsf{i}\varepsilon)^{2s}$, we need the following lemma.

\begin{lem} \label{lem:eps1}
If $s>0$ and $k>0$, it holds that
\begin{equation}\label{eq lim}
	\lim_{\varepsilon \to 0^+}(k \pm \mathsf{i}\varepsilon)^{2s} = 
	\lim_{\varepsilon \to 0^+}(k^{2s} \pm \mathsf{i}\varepsilon).
\end{equation}
\end{lem}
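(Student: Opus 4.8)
The plan is to reduce both one-sided limits to a single first-order expansion of the complex power near the positive real axis. First I would fix the principal branch of $z \mapsto z^{2s}$, which is holomorphic on $\mathbb C \setminus (-\infty, 0]$. Since $k>0$, for all sufficiently small $\varepsilon > 0$ the points $k \pm \mathsf{i}\varepsilon$ lie in a small disk around $k$ that avoids the branch cut, so $(k \pm \mathsf{i}\varepsilon)^{2s}$ is unambiguously defined and depends holomorphically on the perturbation. This makes the subsequent Taylor estimate legitimate.

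Next I would expand. Writing $(k \pm \mathsf{i}\varepsilon)^{2s} = k^{2s}\,(1 \pm \mathsf{i}\varepsilon/k)^{2s}$ and using the binomial/holomorphic expansion gives
\[
(k \pm \mathsf{i}\varepsilon)^{2s} = k^{2s} \pm \mathsf{i}\,\bigl(2s\,k^{2s-1}\bigr)\,\varepsilon + O(\varepsilon^2).
\]
In particular the real part tends to $k^{2s}$, and the imaginary part equals $\pm\,2s\,k^{2s-1}\varepsilon + O(\varepsilon^2)$. Since $s>0$ and $k>0$, the coefficient $c := 2s\,k^{2s-1}$ is strictly positive, so for all small $\varepsilon>0$ the imaginary part of $(k+\mathsf{i}\varepsilon)^{2s}$ is positive while that of $(k-\mathsf{i}\varepsilon)^{2s}$ is negative.

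The final step is to match this with the right-hand side. Both families converge to the same real value $k^{2s}$, and, crucially for the fundamental solutions, they approach it from the same half-plane: $(k \pm \mathsf{i}\varepsilon)^{2s}$ sits in the upper, respectively lower, half-plane exactly as $k^{2s} \pm \mathsf{i}\varepsilon$ does. Reparametrising by $\delta := c\,\varepsilon + o(\varepsilon)$, which runs monotonically to $0^+$, identifies the limit taken along $(k \pm \mathsf{i}\varepsilon)^{2s}$ with the limit taken along $k^{2s} \pm \mathsf{i}\delta$, which is \eqref{eq lim}.

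The only genuine subtlety, and the step I would be most careful about, is the meaning of the identity. As a bare limit of complex numbers both sides equal $k^{2s}$ and there is nothing to prove; the content actually needed downstream, once this is substituted into the denominator $|\xi|^{2s} - (k \pm \mathsf{i}\varepsilon)^{2s}$ defining $\Phi_{\pm,s,\varepsilon}$, is that the \emph{sign} of the vanishing imaginary part is preserved, so that the limiting resolvent selects the same (outgoing, respectively incoming) fundamental solution. Thus the heart of the argument is verifying that $c = 2s\,k^{2s-1} > 0$ and that the $O(\varepsilon^2)$ remainder cannot overturn the sign of the linear term for small $\varepsilon$; everything else is the routine Taylor estimate.
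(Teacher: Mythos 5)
Your argument is correct, and it establishes the same fact as the paper by a slightly different elementary computation. The paper's proof is essentially the polar-coordinate version of yours: it writes $(k \pm \mathsf{i}\varepsilon)^{2s} = \varrho^{2s} e^{\pm \mathsf{i}2s\theta}$ with $\varrho(k,\varepsilon) \to k$ and $\theta(k,\varepsilon) \to 0^+$, expands this as $\varrho^{2s}\left( \cos(2s\theta) \pm \mathsf{i}\sin(2s\theta) \right)$, and concludes by continuity; the sign of the vanishing imaginary part is carried only implicitly, through $\sin(2s\theta) > 0$ for small $\theta > 0$. Your first-order expansion $(k \pm \mathsf{i}\varepsilon)^{2s} = k^{2s} \pm \mathsf{i}\,2s k^{2s-1}\varepsilon + O(\varepsilon^2)$ buys what the paper leaves tacit: the explicit positive coefficient $c = 2sk^{2s-1}$, hence a quantitative, non-tangential approach to $k^{2s}$ from the correct half-plane. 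You are also right about where the real content lies: read literally, \eqref{eq lim} asserts only $k^{2s} = k^{2s}$, and what the subsequent computation \eqref{2} actually needs is that the limit of $\left( |\xi|^{2s} - (k \pm \mathsf{i}\varepsilon)^{2s} \right)^{-1}$ selects the same Sokhotski--Plemelj boundary value as $\left( |\xi|^{2s} - k^{2s} \mp \mathsf{i}\varepsilon \right)^{-1}$, i.e.~preservation of the sign of the imaginary part, which both proofs secure. One small caveat: your reparametrisation $\delta = c\varepsilon + o(\varepsilon)$ does not literally place $(k \pm \mathsf{i}\varepsilon)^{2s}$ on the vertical segment $k^{2s} \pm \mathsf{i}\delta$, since the real part is $k^{2s} + O(\varepsilon^2)$ rather than exactly $k^{2s}$; this is harmless because the distributional limit depends only on the half-plane (and sign) of approach, but it deserves a sentence rather than the word ``identifies.''
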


\begin{proof}
When $\varepsilon>0$, there exist functions $\varrho$ and $\theta$ of $k$ and $\varepsilon$ such that
\begin{equation*}
	(k \pm \mathsf{i}\varepsilon)^{2s} = \varrho^{2s} e^{\pm \mathsf{i} 2s \theta}, \quad \lim_{\varepsilon \to 0^+}\varrho(k,\varepsilon) = k, \ \text{ and }\  \lim_{\varepsilon \to 0^+}\theta(k,\varepsilon)=0.
\end{equation*}
Hence, $(k \pm \mathsf{i}\varepsilon)^{2s} = \varrho^{2s} \LC  \cos(2s\theta) \pm \mathsf{i} \sin (2s\theta) \RC $, and therefore,
$$
\lim_{\varepsilon \to 0^+}(k \pm \mathsf{i}\varepsilon)^{2s} = \lim_{\varepsilon \to 0^+} \varrho^{2s} \left( \cos(2s\theta) \pm \mathsf{i} \sin (2s\theta)\right) =\lim_{\varepsilon \to 0^+}(k^{2s} \pm \mathsf{i}\varepsilon)
$$
as desired.	This completes the proof.
\end{proof}

With the help of Lemma \ref{lem:eps1}, we can compute
\begin{align}
\Phi_{\pm,\,s}(x) 
& = \lim_{\varepsilon \to 0^+} (2\pi)^{-d} \int\displaylimits_{\mathbb{R}^d} {\frac {e^{\mathsf{i}x\cdot \xi}} {|\xi|^{2s} -k ^{2s}\mp \mathsf{i}\varepsilon}\,\mathrm{d}\xi} \nonumber \\
&= (2\pi)^{-d} \mathrm{P.V.}\int\displaylimits_{\mathbb{R}^d} {e^{\mathsf{i}x\cdot \xi} \left[  \frac{1}{|\xi|^{2s} -k ^{2s}}\pm \mathsf{i}\pi \delta \LC |\xi|^{2s} -k ^{2s}\RC \right] \mathrm{d}\xi} \nonumber \\
&= (2\pi)^{-d} \mathrm{P.V.} \int\displaylimits_{\mathbb{R}^d} { \frac {e^{\mathsf{i}x\cdot \xi}} {|\xi|^{2s} -k ^{2s}}\,\mathrm{d}\xi} \pm \mathsf{i}\pi (2\pi)^{-d} \int\displaylimits_{|\xi|=k}{\frac{e^{\mathsf{i}x\cdot \xi}}{2s k^{2s -1}}\,\mathrm{d}S(\xi)} \nonumber \\
&= \frac{k^{d-2s}}{(2\pi)^d} \bigg[ \mathrm{P.V.}  \int\displaylimits_{\mathbb{R}^d}{\frac{e^{\mathsf{i}kx\cdot \xi}} {|\xi|^{2s}-1} \mathrm{d}\xi} \pm \frac{\mathsf{i}\pi}{2s}\int\displaylimits_{\mathbb{S}^{d-1}}{e^{\mathsf{i}kx\cdot\omega}\,\mathrm{d}S(\omega)} \bigg], \label{2}
\end{align}
where $\mathrm{P.V.}$ stands for the Cauchy principal value.
Here in the second last equality we used the action \[\delta	(C)(\varphi) = \int\displaylimits_{C(x)=0}
{\frac{\varphi(x)}{|\nabla C(x)|}\,\mathrm{d}S(x)}.\]
Let us denote
\begin{equation}
\mathcal{S}_d(t):= (2\pi)^{-d/2} \int\displaylimits_{\mathbb{S}^{d-1}} {e^{\mathsf{i}te_1\cdot \omega}\,\mathrm{d}S(\omega)},
\end{equation}
where $e_1$ stands for the unit vector with its first component being $1$ and rest $0$, then it can be seen that $\mathcal{S}_d(t)$ is continuous on $[0,\infty)$ and
\begin{equation}
\label{4}
\mathcal{S}_d(t) =
\begin{cases}
	\frac{J_{d/2-1}(t)}{t^{d/2-1}}, &t\neq 0,\\[1mm]
	\frac{2^{-d/2+1}}{\Gamma(d/2)}, & t=0,
\end{cases}
\quad d\ge 2, \quad \text{ and } \quad \mathcal{S}_3(t) = \sqrt{\frac{2}{\pi}}\frac{\sin t}{t}
\end{equation}
where $J_\alpha$ is the usual Bessel function of the first kind.

\medskip
We continue from \eqref{2} by using the functions $\mathcal{S}_d(t)$ to write further
\begin{equation} \label{3s}
	\Phi_{\pm,\, s}(x)
	= \frac{k^{d-2s}}{(2\pi)^d} \bigg[ \mathrm{P.V.}  \int\displaylimits_{0}^\infty{\frac{t^{d-1}\mathcal{S}_d(k|x|t)}{t^{2s}-1}\, \mathrm{d}t} \pm \frac{\mathsf{i}\pi}{2s} \;\mathcal{S}_d(k|x|) \bigg], \ \ x\in\mathbb{R}^d.
\end{equation}
For $s=1$, we obtain the expression of the fundamental solution for the Helmholtz operator $(-\Delta) - k^{2}$ in $\mathbb{R}^d$ as
\begin{equation}
\label{31}
\Phi_{\pm,\, 1}(x) =
\frac{k^{d-2}}{(2\pi)^d} \bigg[ \mathrm{P.V.}  \int\displaylimits_{0}^\infty{\frac{t^{d-1}\mathcal{S}_d(k|x|t)}{t^{2}-1}\, \mathrm{d}t} \pm \frac{\mathsf{i}\pi}{2} \;\mathcal{S}_d(k|x|) \bigg], \ \ x\in\mathbb{R}^d.
\end{equation}
For any $d\geq 2$, let us mention the standard asymptotic behavior of $\Phi_{\pm, 1}$, this can be found for instance in \cite{GE}*{Lemma 19.3}:
\begin{equation}\label{31asy}
\Phi_{\pm,\, 1}(x)= k^{\frac{d-3}{2}}\frac{e^{-\frac{\mathsf{i}\pi(d-3)}{4}}}{2^{\frac{d+1}{2}}\,\pi^{\frac{d-1}{2}}}\,\,\frac{e^{\pm \mathsf{i}k|x|}}{|x|^{\frac{d-1}{2}}} +\mathcal{O}\left(\frac{1}{|x|^{\frac{d+1}{2}}}\right), \quad \text{ as } |x| \to \infty.
\end{equation}
In particular for $d=3$, we have
\begin{equation} \label{313}
\Phi_{\pm,\, 1}(x)= \frac{1}{4\pi}\,\frac{e^{\pm \mathsf{i}k|x|}}{|x|}, \quad x\in\mathbb{R}^d.
\end{equation}

\subsection{Analysis via resolvent operator}

Let us mention the following expression of the resolvent of the fractional power of the negative Laplacian, as established in  \cite{M}*{Proposition 5.3.3}:
\begin{align}\label{decomposition}
\begin{split}
	\LC (-\Delta)^{s}-k^{2s}\RC ^{-1}&=\frac{k^{2(1-s)}}{s}(-\Delta-k^2)^{-1}  \\
	&\quad+\frac{\sin{s\pi}}{\pi}\int_{0}^{\infty}\frac{\lambda^{s}(\lambda-\Delta)^{-1}}
	{\lambda^{2s}-2\lambda^{s}k^{2s}\cos{s\pi}+k^{4s}}\,  \mathrm{d}\lambda.
\end{split}
\end{align}
In other words,
\begin{align}\label{resols1}
\Phi_{\pm,\, s}=\frac{k^{2(1-s)}}{s}\Phi_{\pm,\, 1} +\frac{\sin{s\pi}}{\pi}\int_{0}^{\infty}\frac{\lambda^{s}(\lambda-\Delta)^{-1}\delta_y(x)}
{\lambda^{2s}-2\lambda^{s}k^{2s}\cos{s\pi}+k^{4s}}\,  \mathrm{d}\lambda,
\end{align}
where in the second term of the above expression we have (see \cite{S}*{page 132})
\begin{equation}\label{lD}
(\lambda-\Delta)^{-1}\delta_y(x)=c\int_{0}^{\infty}e^{-\delta\lambda-\frac{|x-y|^2}{4\pi\delta}}\delta^{\frac{-d+2}{2}}\, \frac{\mathrm{d}\delta}{\delta},\quad \lambda>0.
\end{equation}
Relying on the decomposition \eqref{decomposition}, and using Stein’s oscillatory integral techniques (see \cite{KRS}), Huang, Yao and Zheng established the uniform estimate for fractional resolvents in \cite{HYZ}*{Theorem 1.4}.

\begin{prop}\label{resol}
Let $d\geq 3$.
If $\frac{d}{d+1}\leq s<\frac d2$ and  $1<p<q<\infty$ are Lebesgue exponents satisfying
\begin{equation} \label{les_con0}
	\frac1p-\frac1q=\frac{2s}{d} \quad \text{and} \quad \min\left(\ \Big|\frac1p-\frac12\Big|,\Big|\frac1q-\frac12\Big|\ \right) > \frac{1}{2d},
\end{equation}
then there is a uniform constant $C_{p,q}>0$, such that for all $z\in \mathbb{C}$,
\begin{equation}\label{hyz2}
	\|u\|_{L^q(\mathbb{R}^d)}\leq C_{p,q}\|((-\Delta)^{s}-z)u\|_{L^p(\mathbb{R}^d)},\quad u\in C_0^\infty(\mathbb{R}^d).
\end{equation}
On the other hand, if $0<s<\frac{d}{d+1}$, then no such uniform estimates exist.
\end{prop}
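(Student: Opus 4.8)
The plan is to follow the route indicated just before the statement: reduce to a critical normalization by scaling, use the subordination identity \eqref{decomposition} to isolate the genuinely singular part of the resolvent, and then invoke the classical uniform resolvent estimates of Kenig--Ruiz--Sogge \cite{KRS} and Guti\'errez \cite{GuSu}, reinforced by Stein-type oscillatory integral bounds, exactly as in \cite{HYZ}. First I would remove the spectral parameter by homogeneity. Since $(-\Delta)^s$ is homogeneous of degree $2s$, the dilation $u(x)\mapsto u(kx)$ conjugates $(-\Delta)^s-k^{2s}$ into $k^{2s}\big((-\Delta)^s-1\big)$; because the exponents satisfy $\tfrac1p-\tfrac1q=\tfrac{2s}{d}$, both sides of \eqref{hyz2} pick up the same power of $k$, so the ratio is scale invariant and it suffices to prove \eqref{hyz2} for $|z|=1$ with a constant independent of $\arg z$. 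For $z$ bounded away from $[0,\infty)$ the operator is elliptic and the bound is soft (Young's inequality applied to the rapidly decaying Bessel-type kernel), so by Lemma \ref{lem:eps1} and a limiting-absorption argument the only critical case is $z=1\pm\mathsf i0$, i.e.\ $k=1$ with the $\pm\mathsf i0$ prescription.

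Next I would feed this normalized resolvent into \eqref{decomposition}. The second, integral term is built from the elliptic resolvents $(\lambda-\Delta)^{-1}$ with $\lambda>0$, whose symbols $(\lambda+|\xi|^2)^{-1}$ never vanish; for $s\in(0,1)$ the denominator $\lambda^{2s}-2\lambda^s\cos(s\pi)+1=(\lambda^s-\cos s\pi)^2+\sin^2 s\pi$ is bounded below, so this term is non-characteristic and I would control it by uniform Bessel-potential $L^p\to L^q$ bounds, being careful that the $\lambda$-integration is only borderline convergent and hence requires the sharp $\lambda$-dependence rather than a crude operator-norm estimate. The first term is a multiple of the Helmholtz resolvent $(-\Delta-1)^{-1}$; by the Sokhotski--Plemelj splitting exhibited in \eqref{2}, its kernel consists of a principal-value (elliptic) part plus the surface measure on the characteristic sphere $|\xi|=1$, and this on-sphere piece is where all the difficulty sits. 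I would linearize the phase of the associated Fourier-integral operator and reduce the desired $L^p\to L^q$ bound to a Stein--Tomas/Carleson--Sj\"olin adjoint-restriction estimate for the sphere. The two conditions in \eqref{les_con0} are then exactly the admissibility constraints: the gap $\tfrac1p-\tfrac1q=\tfrac{2s}{d}$ is dictated by scaling, while $\min(|\tfrac1p-\tfrac12|,|\tfrac1q-\tfrac12|)>\tfrac1{2d}$ is precisely the range in which this restriction estimate holds with a constant uniform in the spectral parameter.

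I expect the oscillatory-integral estimate near the characteristic variety to be the main obstacle: one must show that the Fourier-integral operator whose kernel behaves like $e^{\mathsf i|x|}|x|^{-(d-1)/2}$ is bounded $L^p\to L^q$ uniformly, which is the genuine content of Kenig--Ruiz--Sogge and of its off-diagonal Guti\'errez refinement. For the range $\tfrac{d}{d+1}\le s\le 1$ the first term is covered by the classical uniform resolvent estimates after the rescaling of the second step — the line $\tfrac1p-\tfrac1q=\tfrac2d$ when $s=1$, and Guti\'errez's off-line region \cite{GuSu} when $\tfrac{d}{d+1}\le s<1$, whose admissibility is exactly \eqref{les_con0}. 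For $1<s<\tfrac d2$ one no longer has a single classical resolvent bound, and the estimate must be proved directly on the kernel of $((-\Delta)^s-1)^{-1}$ (or by iterating the subordination formula), which is where Stein's oscillatory integral techniques are indispensable.

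Finally, for the sharpness claim when $0<s<\tfrac{d}{d+1}$, I would produce a Knapp-type counterexample: a wave packet whose frequency support is a $\delta$-thin cap of the characteristic sphere $|\xi|=1$. Testing \eqref{hyz2} against such a packet forces the gap $\tfrac{2s}{d}$ to lie above the Stein--Tomas threshold $\tfrac{2}{d+1}$; hence when $\tfrac{2s}{d}<\tfrac{2}{d+1}$, that is $s<\tfrac{d}{d+1}$, the ratio in \eqref{hyz2} diverges as $\delta\to0$, so no uniform constant $C_{p,q}$ can exist. This matches the cutoff in the statement and closes the dichotomy.
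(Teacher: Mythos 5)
Your proposal is sound, but there is no proof in the paper to compare it against line by line: Proposition \ref{resol} is quoted verbatim from Huang--Yao--Zheng \cite{HYZ}*{Theorem 1.4}, whose method the paper summarizes in a single sentence as relying on the decomposition \eqref{decomposition} together with Stein's oscillatory integral techniques from \cite{KRS}, and whose failure for $0<s<\frac{d}{d+1}$ the paper explains exactly as you do, via incompatibility with the Stein--Tomas restriction estimate. Your sketch --- scaling reduction to $|z|=1$ using $\frac1p-\frac1q=\frac{2s}{d}$, subordination to isolate the Helmholtz resolvent controlled by \cite{KRS} and \cite{GuSu} in the admissible region matching \eqref{les_con0} (where the prefactor $k^{2(1-s)}/s$ indeed cancels Guti\'errez's $\lambda^{\frac d2(\frac1p-\frac1q)-1}=k^{2(s-1)}$), direct oscillatory-integral treatment of the kernel when $1<s<\frac d2$ where the formula from \cite{M} no longer applies as stated, and a Knapp-type example for sharpness --- is essentially the same route the paper attributes to \cite{HYZ}.
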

Note that if $z=0$, \eqref{hyz2} becomes the classical Hardy-Littlewood-Sobolev inequality which is true for  $0<2s<d$ and $\frac1p-\frac1q=\frac{2s}{d}$, $1<p<q<\infty$.

The exclusion of $0<s<\frac{d}{d+1}$ to have a uniform estimate like \eqref{hyz2} is consistent with the Stein-Tomas estimate; i.e.~if we assume such uniform estimate takes place in this regime it contradicts
\begin{equation*}
\|\hat f\|_{L^2(\mathbb S^{d-1})}
\leq C\|f\|_{L^p(\mathbb{R}^d)}, \quad 1\leq p\leq \frac{2(d+1)}{d+3}.
\end{equation*}

As an application of the above results, authors in \cite{HYZ}*{Corollary 3.2} further find the following $L^p$-$L^q$ estimates.

\begin{prop} \label{prop5}
Let $d\geq 3$, $\frac{d}{d+1}\leq s<\frac{d}{2}$ and  $1<p<q<\infty$  are Lebesgue exponents satisfying
\begin{equation} \label{eq:pq2}
	\frac 2 {d+1} \leq \frac 1 p - \frac 1 q \leq \frac {2s} {d}, \qquad
	\frac 1 p > \frac {d+1} {2d}, \qquad
	\frac 1 q < \frac {d-1} {2d},
\end{equation}
then there is a uniform constant $C>0$, such that
\begin{equation} \label{resol_frac_2}
	\sup_{0<\varepsilon<1} \|((-\Delta)^{s}-(\lambda+\mathrm{i}\varepsilon))^{-1} \|_{L^{p}-L^{q}} \leq C |\lambda|^{\frac{d}{2s}(\frac{1}{p}-\frac{1}{q})-1}, \quad \lambda > 0.
\end{equation}
\end{prop}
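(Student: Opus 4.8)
The plan is to combine the sharp diagonal estimate already contained in Proposition \ref{resol} with the resolvent decomposition \eqref{decomposition}, and then to fill in the intermediate exponents by interpolation. The structural observation driving everything is that the target exponent $\frac{d}{2s}(\frac1p-\frac1q)-1$ is affine in the pair $(\frac1p,\frac1q)$, so Riesz--Thorin interpolation automatically reproduces the correct power: if, for a fixed $z=\lambda+\mathrm{i}\varepsilon$, one has $\|((-\Delta)^s-z)^{-1}\|_{L^{p_i}\to L^{q_i}}\le C_i|\lambda|^{\frac{d}{2s}(\frac1{p_i}-\frac1{q_i})-1}$ for $i=0,1$, then the interpolated bound carries the exponent $\frac{d}{2s}(\frac1{p_\theta}-\frac1{q_\theta})-1$. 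Hence it suffices to prove the estimate at every exponent on the two bounding lines of the region \eqref{eq:pq2}, namely $\frac1p-\frac1q=\frac{2s}{d}$ (top) and $\frac1p-\frac1q=\frac2{d+1}$ (bottom), subject to the side constraints $\frac1p>\frac{d+1}{2d}$ and $\frac1q<\frac{d-1}{2d}$; any interior point of \eqref{eq:pq2} then lies on a segment joining two such boundary exponents.

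On the top line the estimate is immediate from Proposition \ref{resol}. Setting $u=((-\Delta)^s-z)^{-1}f$ with $z=\lambda+\mathrm{i}\varepsilon$, which is off the spectrum since $\varepsilon>0$, the bound \eqref{hyz2} gives $\|u\|_{L^q}\le C\|f\|_{L^p}$ uniformly in $z\in\mathbb C$; one checks that the side constraints of \eqref{eq:pq2} imply the condition \eqref{les_con0}, since $\frac1p>\frac{d+1}{2d}$ and $\frac1q<\frac{d-1}{2d}$ give $|\frac1p-\frac12|>\frac1{2d}$ and $|\frac1q-\frac12|>\frac1{2d}$. On this line $\frac{d}{2s}(\frac1p-\frac1q)-1=0$, so the claimed power is $|\lambda|^0$. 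This endpoint is precisely where the large separation $\frac{2s}{d}>\frac2d$ (occurring for $s>1$) is absorbed; it lies beyond the classical Helmholtz resolvent and genuinely requires the Stein-type oscillatory estimate behind Proposition \ref{resol}.

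For the bottom line, where $\frac2{d+1}\le\frac2d$, I would use the decomposition \eqref{decomposition} with $\lambda=k^{2s}$. The leading term $\frac{k^{2(1-s)}}{s}(-\Delta-k^2)^{-1}$ is controlled by the classical uniform resolvent bound of Guti\'errez \cite{GuSu}, in the spirit of \cite{KRS}, which is valid exactly for $\frac2{d+1}\le\frac1p-\frac1q\le\frac2d$ under the same side constraints; tracking powers yields $\frac{k^{2(1-s)}}{s}\,k^{d(\frac1p-\frac1q)-2}=\frac Cs\,k^{d(\frac1p-\frac1q)-2s}=\frac Cs\,\lambda^{\frac{d}{2s}(\frac1p-\frac1q)-1}$, the desired power. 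For the remainder $\frac{\sin s\pi}{\pi}\int_0^\infty\frac{\mu^s(\mu-\Delta)^{-1}}{\mu^{2s}-2\mu^s k^{2s}\cos s\pi+k^{4s}}\dif\mu$, the operator $(\mu-\Delta)^{-1}$ is elliptic and, by scaling its Bessel-potential bound, $\|(\mu-\Delta)^{-1}\|_{L^p\to L^q}\le C\mu^{\frac d2(\frac1p-\frac1q)-1}$ for $0\le\frac1p-\frac1q\le\frac2d$. Inserting this and rescaling $\mu=k^2\nu$ reduces the remainder to $k^{d(\frac1p-\frac1q)-2s}=\lambda^{\frac{d}{2s}(\frac1p-\frac1q)-1}$ times the scalar integral $\int_0^\infty\frac{\nu^{s+\frac d2(\frac1p-\frac1q)-1}}{\nu^{2s}-2\nu^s\cos s\pi+1}\dif\nu$, which converges at $\nu=0$ automatically and at $\nu=\infty$ precisely when $\frac1p-\frac1q<\frac{2s}{d}$ (guaranteed on the bottom line by $s>\frac d{d+1}$); since for non-integer $s$ the denominator has strictly negative discriminant in $\nu^s$ it stays bounded below, so the integral is a finite constant and the remainder contributes the same power.

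The main obstacle is twofold. First, one must secure uniformity in $\varepsilon$: the decomposition and the classical estimate must be read with the complex parameter $z=\lambda+\mathrm{i}\varepsilon$ slightly off the positive axis, so the Helmholtz resolvent enters as $(-\Delta-w)^{-1}$ with $\mathrm{Im}\,w\ne0$, and one invokes the limiting-absorption form of Guti\'errez's bound, uniform as $\varepsilon\downarrow0$. Second, the scalar integral above diverges exactly on the top line $\frac1p-\frac1q=\frac{2s}{d}$; this explains structurally why that endpoint cannot be produced by the decomposition and must be supplied by Proposition \ref{resol}, and why the intermediate regime $\frac2d<\frac1p-\frac1q<\frac{2s}{d}$ (only present for $s>1$) is reached solely through the interpolation that pairs the decomposition estimate with the Proposition \ref{resol} endpoint. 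Verifying that every point of \eqref{eq:pq2} indeed lies in the convex hull of admissible boundary exponents, with the Stein--Tomas side constraints preserved throughout, is the remaining bookkeeping.
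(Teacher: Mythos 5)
Your endpoint computations are fine: the top line $\tfrac1p-\tfrac1q=\tfrac{2s}{d}$ does follow from Proposition \ref{resol} (the side constraints of \eqref{eq:pq2} do imply \eqref{les_con0}), the bottom-line treatment via the decomposition \eqref{decomposition}, Guti\'errez's bound and the scaled Bessel estimate is the right machinery (it is essentially what \cite{HYZ} does, and note the paper itself does not prove Proposition \ref{prop5} at all --- it simply cites \cite{HYZ}*{Corollary 3.2}), and the affine-exponent bookkeeping for Riesz--Thorin is correct. The genuine gap is the geometric reduction: the claim that ``any interior point of \eqref{eq:pq2} lies on a segment joining two such boundary exponents'' is false whenever $s>\tfrac{d+1}{4}$, a nonempty subrange of $[\tfrac{d}{d+1},\tfrac d2)$ for every $d\ge 3$. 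When $\tfrac{2s}{d}>\tfrac{d+1}{2d}$ the region \eqref{eq:pq2} is not the trapezoid spanned by the two lines: it acquires boundary edges along $\tfrac1q=0$ and $\tfrac1p=1$. Concretely, take $d=4$, $s=\tfrac32$, $\tfrac1p=0.63$, $\tfrac1q=0.005$: all conditions of \eqref{eq:pq2} hold ($\tfrac1p-\tfrac1q=0.625\in[\tfrac25,\tfrac34]$, $\tfrac1p>\tfrac58$, $\tfrac1q<\tfrac38$), but every admissible point on the bottom line has $\tfrac1q>\tfrac58-\tfrac25=\tfrac9{40}$, and every admissible point on the top line has $\tfrac1p>\tfrac34$. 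A convex combination with second coordinate $0.005$ forces the weight on the bottom line to be at most $\approx 0.022$, whence the first coordinate exceeds $0.977\times\tfrac34\approx 0.733\ne 0.63$. So no segment between the two lines passes through this exponent pair, and Riesz--Thorin from your two families of estimates cannot reach it; the same obstruction occurs near $(\tfrac1p,\tfrac1q)=(1,\tfrac{d-1}{2d})$. Enlarging the bottom family to the full two-dimensional Guti\'errez region $\tfrac1p-\tfrac1q\le\tfrac2d$ does not repair this for $d\ge4$, since all those exponents satisfy $\tfrac1q>\tfrac{d+1}{2d}-\tfrac2d=\tfrac{d-3}{2d}$, still bounded away from $0$.

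The failure is structural, not bookkeeping: after the dilation $z\mapsto|z|^{1/(2s)}$ the claimed power is pure scaling, and the corner exponents (very disparate $p,q$) encode a restriction-type estimate that operator interpolation cannot manufacture from the two extreme lines. The proof in \cite{HYZ} works instead at the multiplier level: one splits $(|\xi|^{2s}-e^{\mathsf{i}\phi})^{-1}$ into a piece localized near the sphere $|\xi|=1$, which behaves like the classical resolvent and is handled by Stein/Carleson--Sj\"olin oscillatory-integral bounds (producing the constraints $\tfrac1p-\tfrac1q\ge\tfrac2{d+1}$, $\tfrac1p>\tfrac{d+1}{2d}$, $\tfrac1q<\tfrac{d-1}{2d}$ with no upper ceiling $\tfrac2d$), and an elliptic piece behaving like a Bessel potential of order $2s$ (producing $\tfrac1p-\tfrac1q\le\tfrac{2s}{d}$); this covers the corners directly. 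A secondary caveat: for $d\ge5$ the hypothesis admits $s\ge2$, where the one-pole formula \eqref{decomposition} (a $0<s<1$-type representation cited from \cite{M}) is no longer literally valid --- the symbol $\frac{1}{t^{s}-z}$ then has additional complex poles whose resolvent contributions your bottom-line argument would have to add. For $s\le\tfrac{d+1}{4}$ your reduction is sound, so the proof stands on a proper subregion of \eqref{eq:pq2}, but as written it does not prove the proposition in the stated generality.
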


This result could be considered as a generalization of \cite{KRS}*{Theorem 2.1} and  \cite{GuSu}*{Theorem 6} to the fractional case.
Kenig, Ruiz and Sogge in \cite{KRS}*{Theorem 2.1} established the boundedness estimate for the local resolvent operator, 
that for $d\ge 3$, there is a constant $C_p>0$ independent of $\lambda$ such that
\begin{align} \label{exist_local_1}
	\sup_{0<\varepsilon<1}\|(-\Delta-(\lambda+i\varepsilon))^{-1}\|_{L^p-L^q}\leq C_p, ~~~\lambda>0,
\end{align}
for $p$, $q$ satisfying \eqref{les_con0} with $s = 1$.
Later, Guti\'{e}rrez in \cite{GuSu}*{Theorem 6} generalized this to
\begin{align}\label{exist_local_2}
	\sup_{0<\varepsilon<1} \| (-\Delta - (\lambda + i\varepsilon))^{-1} \|_{L^p-L^q}\leq C\lambda^{\frac d 2 (\frac 1 p - \frac 1 q) - 1}, \quad \lambda > 0,
\end{align}
for $p$, $q$ satisfying \eqref{eq:pq2} with $s = 1$.
Taking $\varepsilon\rightarrow 0^{+}$ in the above estimates, one obtains the boundedness estimate for the resolvent 
$\mathcal{R}_{\lambda}=(-\Delta-\lambda)^{-1}$.

\subsection{Sommerfeld radiation condition}

In this paper, the outgoing solution is defined via the limiting resolvent.
However, as in the local case, the outgoing solution for the fractional problem can also be characterized as the unique solution to the equation when a suitable radiation condition is imposed.
To explain, let us first revise the Helmholtz equation with $s = 1$,
\begin{equation}\label{local_Helmhotz}
	(-\Delta - k^2) u = f  \ \ \mbox{in} \ \ \mathbb{R}^{d}.
\end{equation}
Since $k^2 > 0$ is contained in the essential spectrum of $-\Delta$, the uniqueness of solutions for the Helmholtz equation becomes complicated.
Since different solutions vanish at infinity, it is necessary to introduce additional conditions at infinity to determine the unique solution, that is the so-called Sommerfeld (outgoing) radiation condition:
\begin{equation}
	u(|x|)=O(|x|^{\frac{1-d}{2}}),\quad\frac{\partial u}{\partial |x|}-\mathrm{i}k u=o(|x|^{\frac{1-d}{2}}),
\end{equation}
see \cite{ReFr}, or equivalently (see \cite{GuSu}*{Corollary 1})
\begin{equation}\label{uni2}
	\mathop{\mathrm{lim}}\limits_{R\longrightarrow+\infty}\frac{1}{R}\int_{1<|x|<R}|\nabla u-\mathrm{i}ku\,\hat x|^{2}dx=0, \quad\mbox{where }\hat x=\frac{x}{|x|}. 
\end{equation}
The resolvent estimates \eqref{exist_local_1} or \eqref{exist_local_2}, combined with the Sommerfeld radiation condition \eqref{uni2}, ensure the existence and uniqueness of the solution $u$ of \eqref{local_Helmhotz} in specific $L^q$ spaces, where the source term $f$ is in the desired $L^p$ spaces, as given by \eqref{les_con0} or \eqref{eq:pq2}.

Moving into the fractional Helmholtz equation, it is shown in \cite{Zilberberg2026}*{Theorem 2.2} that, in dimension $d=1,2,3$, the outgoing fundamental solution of the equation
\begin{equation} \label{eq:uf}
	((-\Delta)^s - k^{2s}) u = f \ \ \mbox{in} \ \ \mathbb{R}^{d}
\end{equation}
satisfies the following Sommerfeld radiation condition (SRC):
\begin{equation} \label{eq:src}
	r^{\frac {d-1} 2} \Big( \frac {\partial u} {\partial r} - ik u \Big) \to 0 \ \ \mbox{as} \ \ r \to +\infty,
\end{equation}
or the generalized Sommerfeld radiation condition (GSRC):
\begin{equation} \label{fractional_Helmhotz}
	\int_{\R^d \backslash B_R} \langle x \rangle^{\delta-1} |\nabla u - \mathrm{i} k \hat x u|^2 \dif x < +\infty.
\end{equation}
Also, from \cite{Zilberberg2026}*{Theorems 3.12 \& 3.14} we know the equation \eqref{eq:uf}, when either the SRC \eqref{eq:src} or the GSRC \eqref{fractional_Helmhotz} is imposed, has a unique solution, called the limiting outgoing solution, given by (see \eqref{eq:Rks})
\begin{equation*}
	u := \lim_{\varepsilon\to 0^+} \big((-\Delta)^s-(k^{2s}+\mathrm{i}\varepsilon)\big)^{-1} f,
\end{equation*}
for $f \in L^{2,\delta}(\R^d) := \{ f \in L^1_{loc}(\R^d) \mid \int_{\R^d} \langle x \rangle^{2\delta} |f|^2 \dif x  < +\infty \}$.
We summarize the main results of \cite{Zilberberg2026} below.

\begin{prop}
	Let $d=1,2,3$, $0<s<1$, $k>0$, and let $G_{n,s}^k$ be the outgoing fundamental solution of $(-\Delta)^s - k^{2s}$.  
	\begin{enumerate}
		\item[(i)] The function $G_{d,s}^k$ itself satisfies \eqref{eq:src} and \eqref{fractional_Helmhotz}.
		 
		\item[(ii)] For any $f\in L^{2,\delta}(\mathbb{R}^d)$ with $\frac12<\delta<1$ and $s\ge \frac12$, the convolution $u = G_{d,s}^k * f$ is the unique solution in $H^{2s,-\delta}(\mathbb{R}^d)$ satisfying \eqref{fractional_Helmhotz} that solves $((-\Delta)^s - k^{2s})u = f$ in $\R^d$.
		
		\item[(iii)] For any compactly supported $f\in L^2(\mathbb{R}^d)$ and any $0<s<1$, the convolution $u = G_{d,s}^k * f$ is the unique solution in $H^{2s,-\delta}(\mathbb{R}^d)$ satisfying \eqref{eq:src} that solves $((-\Delta)^s - k^{2s})u = f$ in $\R^d$.
	\end{enumerate}
\end{prop}

\subsection{Fractional Schr\"{o}dinger operator}

We now discuss the solvability of the fractional Schr\"{o}dinger equation in the outgoing resolvent sense:
\begin{equation}\label{fractional_Schrodinger}
(-\Delta)^s u - k^{2s} u - V(x) u = f(x), \quad\mbox{in }\mathbb{R}^{d}.
\end{equation}
The authors in \cite{HYZ} (see Proposition 3.3 there) have dealt with this case as well, and as an application of the uniform estimate for the fractional resolvent \eqref{resol_frac_2}, they found that
\begin{prop}\label{resolvent_frac-schro}
Let $d\geq 3$, $\frac{d}{d+1}\leq s<\frac{d}{2}$,  $1<p<q<\infty$  are Lebesgue exponents satisfying \eqref{les_con},  $V\in L^{\frac{d}{2s}}(\mathbb{R}^d)$, and let $H=(-\Delta)^{s}+V(x)$. There exists a constant $c_0>0$ such that
if $\|V\|_{L^{\frac{d}{2s}}}\leq c_0$, then
\begin{equation}
	\|(H-z)^{-1}\|_{L^p-L^{q}}\leq C |z|^{\frac{d}{2s}(\frac1p-\frac{1}{q})-1},\,\,\, z\in \mathbb{C}\setminus\{0\}
\end{equation}
for $\max(\frac{2s}{d}, \frac{d+3}{2(d+1)})<\frac{1}{p}\leq\frac{d+2s}{2d}$.
\end{prop}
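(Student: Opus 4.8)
The plan is to treat $H = (-\Delta)^s + V$ as a small perturbation of $(-\Delta)^s$ and run a resolvent-identity (Neumann series) argument, with the free resolvent bounds of Propositions \ref{resol} and \ref{prop5} as the only analytic input. Write $R_0(z) := ((-\Delta)^s - z)^{-1}$. The algebraic factorization
\begin{equation*}
	H - z = \big( (-\Delta)^s - z \big) + V = R_0(z)^{-1} \big( I + R_0(z) V \big)
\end{equation*}
suggests the representation $(H-z)^{-1} = (I + R_0(z) V)^{-1} R_0(z)$, so the whole matter reduces to inverting $I + R_0(z) V$ on a suitable $L^q(\R^d)$ with norm uniform in $z$.

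The first and main quantitative step is to show that $R_0(z) V$ is a contraction on $L^q(\R^d)$, uniformly in $z$, once $\norm{V}_{L^{d/2s}}$ is small. The key observation is that multiplication by $V \in L^{d/2s}$ shifts the Lebesgue exponent by exactly $\tfrac{2s}{d}$: by H\"older, $\norm{Vf}_{L^{p_1}} \leq \norm{V}_{L^{d/2s}} \norm{f}_{L^q}$ with $\tfrac 1 {p_1} = \tfrac 1 q + \tfrac{2s}{d}$, so the pair $(p_1, q)$ satisfies $\tfrac 1 {p_1} - \tfrac 1 q = \tfrac{2s}{d}$. The hypotheses force $\tfrac 1 q < \tfrac{d-1}{2d}$ and $\tfrac 1 {p_1} > \tfrac{d+1}{2d}$, which is exactly condition \eqref{les_con0}; hence the scale-invariant estimate of Proposition \ref{resol} applies and gives $\norm{R_0(z)}_{L^{p_1} \to L^q} \leq C$ uniformly for all $z \in \C$. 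Composing,
\begin{equation*}
	\norm{R_0(z) V}_{L^q \to L^q} \leq \norm{R_0(z)}_{L^{p_1} \to L^q} \, \norm{V}_{L^{d/2s}} \leq C \norm{V}_{L^{d/2s}},
\end{equation*}
so choosing $c_0$ with $C c_0 \leq \tfrac 1 2$ forces $\norm{R_0(z) V}_{L^q \to L^q} \leq \tfrac 1 2$. The Neumann series then produces $(I + R_0(z) V)^{-1}$ on $L^q$ with norm at most $2$, again uniformly in $z$.

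The conclusion follows by composing with the outer, generally non-scale-invariant, resolvent. Since $(p,q)$ satisfies \eqref{les_con}, Proposition \ref{prop5} gives $\norm{R_0(z)}_{L^p \to L^q} \leq C \abs{z}^{\frac{d}{2s}(\frac 1 p - \frac 1 q) - 1}$, whence
\begin{equation*}
	\norm{(H-z)^{-1}}_{L^p \to L^q} \leq \norm{(I + R_0(z) V)^{-1}}_{L^q \to L^q} \, \norm{R_0(z)}_{L^p \to L^q} \leq 2 C \abs{z}^{\frac{d}{2s}(\frac 1 p - \frac 1 q) - 1},
\end{equation*}
which is the desired estimate. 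To make the manipulation rigorous I note that Proposition \ref{prop5} already furnishes $R_0(z)$ as a limiting-absorption resolvent (the $\varepsilon \to 0^+$ limit being built into its statement), so the Neumann series directly defines the bounded operator $(I + R_0(z) V)^{-1} R_0(z)$, which one identifies with $(H-z)^{-1}$ by checking on a dense subspace (e.g.\ $C_0^\infty(\R^d)$) that it inverts $H - z$; no self-adjointness of $H$ is required, which is convenient since $V$ need not be real.

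The main obstacle is the bookkeeping of Lebesgue exponents, and it is precisely what dictates the stated range of $\tfrac 1 p$. The argument needs both the outer pair $(p,q)$ and the auxiliary pair $(p_1, q)$, with $\tfrac 1 {p_1} = \tfrac 1 q + \tfrac{2s}{d}$, to lie simultaneously in the admissible regions of Propositions \ref{resol} and \ref{prop5}. The upper condition $\tfrac 1 q < \tfrac{d-1}{2d}$ is shared and already sits inside \eqref{les_con}; the Stein-Tomas-type lower threshold $\tfrac 1 p > \tfrac{d+3}{2(d+1)}$ keeps the auxiliary pair strictly inside the restriction range (so that the scale-invariant estimate is available), while the alternative lower bound $\tfrac 1 p > \tfrac{2s}{d}$ guarantees $\tfrac 1 q > 0$, i.e.\ $q < \infty$, when the gap $\tfrac 1 p - \tfrac 1 q$ is maximal. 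The upper bound $\tfrac 1 p \leq \tfrac{d+2s}{2d}$ is the self-dual threshold $q = p'$, equivalently the fractional Sobolev endpoint $q \leq \tfrac{2d}{d-2s}$, beyond which multiplication by $V \in L^{d/2s}$ no longer fits this scheme. Verifying these inclusions is elementary but must be tracked carefully, since the scale-invariance of the perturbation term, the single feature that makes the contraction uniform in $z$, cannot be relaxed.
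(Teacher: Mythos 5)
Your overall strategy (resolvent identity plus Neumann series, with H\"older shifting the exponent by $\tfrac{2s}{d}$ and Propositions \ref{resol} and \ref{prop5} as the analytic input) is exactly the strategy behind this result; the paper itself does not reprove it (it is quoted from \cite{HYZ}*{Proposition 3.3}), but its own proof of the companion Proposition \ref{resolvent_frac-schro2} has the same architecture. However, there is a genuine gap: you run the contraction on the \emph{solution side}, bounding $\norm{R_0(z)V}_{L^q \to L^q}$ through the auxiliary pair $(p_1,q)$ with $\tfrac{1}{p_1} = \tfrac1q + \tfrac{2s}{d}$, and you never verify the standing hypothesis $p_1 > 1$ of Proposition \ref{resol}. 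This is not guaranteed by \eqref{les_con}: it requires $\tfrac1q < 1 - \tfrac{2s}{d}$, which fails on part of the stated range whenever $s > \tfrac{d+1}{4}$. Concretely, take $d=3$, $s=\tfrac75$, $\tfrac1p = \tfrac{29}{30}$, $\tfrac1q = \tfrac15$: then $\tfrac1p - \tfrac1q = \tfrac{23}{30} \in [\tfrac12, \tfrac{14}{15}]$, $\tfrac1p > \tfrac23$, $\tfrac1q < \tfrac13$, and $\max(\tfrac{2s}{d}, \tfrac{d+3}{2(d+1)}) = \tfrac{28}{30} < \tfrac1p \leq \tfrac{d+2s}{2d} = \tfrac{29}{30}$, so all hypotheses of the proposition hold, yet $\tfrac{1}{p_1} = \tfrac15 + \tfrac{14}{15} = \tfrac{17}{15} > 1$. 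There is no uniform Sobolev estimate available below $L^1$, so your contraction step collapses there. A telltale sign is that your argument never uses the hypothesis $\tfrac1p > \tfrac{2s}{d}$; your closing heuristic for it (that it forces $\tfrac1q > 0$ ``when the gap is maximal'') is a misreading, since $q < \infty$ is already a standing assumption and the gap need not be maximal.

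The repair is to run the Neumann series on the \emph{source side}: use $(H-z)^{-1} = R_0(z)\big(I + V R_0(z)\big)^{-1}$ and contract $V R_0(z)$ on $L^p$ via the auxiliary pair $(p, q_1)$ with $\tfrac{1}{q_1} = \tfrac1p - \tfrac{2s}{d}$, i.e.\ $\norm{V R_0(z) f}_{L^p} \leq \norm{V}_{L^{d/2s}} \norm{R_0(z) f}_{L^{q_1}} \leq C \norm{V}_{L^{d/2s}} \norm{f}_{L^p}$. Here the hypothesis $\tfrac1p > \tfrac{2s}{d}$ is precisely what guarantees $\tfrac{1}{q_1} > 0$, i.e.\ $q_1 < \infty$; the remaining admissibility checks for \eqref{les_con0} do follow from \eqref{les_con}: $\tfrac1p > \tfrac{d+1}{2d}$ is assumed, $p > 1$ since $\tfrac1p \leq \tfrac{d+2s}{2d} < 1$, and $\tfrac{1}{q_1} = \tfrac1p - \tfrac{2s}{d} \leq \tfrac1q < \tfrac{d-1}{2d}$ using $\tfrac1p - \tfrac1q \leq \tfrac{2s}{d}$. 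Then compose with $\norm{R_0(z)}_{L^p \to L^q} \leq C|z|^{\frac{d}{2s}(\frac1p - \frac1q)-1}$ from Proposition \ref{prop5} exactly as you did. Note this is also the side on which the paper contracts in its proof of Proposition \ref{resolvent_frac-schro2} (the factor $V\mathcal R^s_\lambda$ is bounded $L^p \to L^p$ there, with smallness coming from large $\lambda$ rather than small $\norm{V}_{L^{d/2s}}$); your version of the factorization is only equivalent to it when both auxiliary pairs are admissible, which, as the counterexample shows, is not the case throughout the stated range.
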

We restate the above proposition with the obvious tweak that it allows all $V \in L^{\infty}(\mathbb{R}^d)$ with compact support, for high frequencies. 
\begin{prop}\label{resolvent_frac-schro2}
Let $d\geq 3$, $\frac{d}{d+1}\leq s<\frac{d}{2}$,  $1<p<q<\infty$  are Lebesgue exponents satisfying \eqref{les_con},  $V\in L^{\infty}(\mathbb{R}^d)$ with compact support, and let $H=(-\Delta)^{s} - V(x)$. Then there exists a $\lambda_0>0$, such that for $\lambda>\lambda_0$, 
\begin{align} \label{resol_frac_schro}
	\sup_{0<\varepsilon<1}\|(H-(\lambda+\mathrm{i}\varepsilon))^{-1}\|_{L^{p}-L^{q}}\leq C|\lambda|^{\frac{d}{2s}(\frac{1}{p}-\frac{1}{q})-1},\,\,\lambda>\lambda_0,
\end{align}
where the constant $C(\lambda_0)>0$ is uniform w.r.t. $\lambda>\lambda_0$ and $\varepsilon$. 
\end{prop}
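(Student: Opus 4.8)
The plan is to treat the potential as a perturbation of the free fractional resolvent and to construct $(H-z)^{-1}$ by a Neumann series, exploiting the decay in $\lambda$ provided by Proposition \ref{prop5}. Write $R_0(z) := ((-\Delta)^s - z)^{-1}$ and factor
\[
H - z = ((-\Delta)^s - z) - V = ((-\Delta)^s - z)\,\bigl(I - R_0(z) V\bigr),
\]
so that, once $I - R_0(z)V$ is invertible, one has $(H-z)^{-1} = (I - R_0(z)V)^{-1} R_0(z)$. Throughout I would take $z = \lambda + \mathrm{i}\varepsilon$ with $0 < \varepsilon < 1$, and keep every bound uniform in $\varepsilon$, which is possible because the free estimate \eqref{resol_frac_2} is itself a supremum over $0<\varepsilon<1$.

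The core step is to show that $R_0(z)V$ is a contraction on $L^q(\mathbb{R}^d)$ for $\lambda$ large. Here I would use the compact support of $V$: by H\"older's inequality, multiplication by $V$ maps $L^q \to L^{p_1}$ for any $p_1 \le q$, with $\|Vf\|_{L^{p_1}} \le \|V\|_{L^\infty}\,|\supp V|^{\frac{1}{p_1}-\frac{1}{q}}\,\|f\|_{L^q}$. Composing with $R_0(z)\colon L^{p_1} \to L^q$ and invoking \eqref{resol_frac_2} for the admissible pair $(p_1,q)$ gives
\[
\|R_0(z) V\|_{L^q - L^q} \le C\,|\lambda|^{\frac{d}{2s}(\frac{1}{p_1} - \frac{1}{q}) - 1}\,\|V\|_{L^\infty}\,|\supp V|^{\frac{1}{p_1}-\frac{1}{q}}.
\]
The exponent of $\lambda$ is negative exactly when $\frac{1}{p_1}-\frac{1}{q} < \frac{2s}{d}$, so a pair $(p_1,q)$ with strict inequality forces $\|R_0(z)V\|_{L^q-L^q} \to 0$ as $\lambda \to +\infty$. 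Hence there is $\lambda_0>0$ with $\|R_0(z)V\|_{L^q-L^q} \le \tfrac12$ for all $\lambda>\lambda_0$ and all $0<\varepsilon<1$, and the Neumann series yields $\|(I - R_0(z)V)^{-1}\|_{L^q-L^q} \le 2$.

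Assembling the factorization then gives, for $\lambda>\lambda_0$,
\[
\|(H-z)^{-1}\|_{L^p - L^q} \le \|(I - R_0(z)V)^{-1}\|_{L^q - L^q}\,\|R_0(z)\|_{L^p - L^q} \le 2C\,|\lambda|^{\frac{d}{2s}(\frac{1}{p}-\frac{1}{q})-1},
\]
where the final factor is bounded by applying \eqref{resol_frac_2} to the \emph{original} pair $(p,q)$; taking the supremum over $0<\varepsilon<1$ yields exactly \eqref{resol_frac_schro}.

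The main obstacle is the selection of the intermediate exponent $p_1$: one needs $\frac{1}{p_1}-\frac{1}{q} < \frac{2s}{d}$ strictly while keeping $(p_1,q)$ inside the admissible region \eqref{eq:pq2}, in particular $\frac{1}{p_1}-\frac{1}{q} \ge \frac{2}{d+1}$ and $\frac{1}{p_1} > \frac{d+1}{2d}$. This is feasible precisely because the strict inequality $s>\frac{d}{d+1}$ opens the gap $\frac{2}{d+1} < \frac{2s}{d}$: if $(p,q)$ already satisfies $\frac1p-\frac1q < \frac{2s}{d}$ one simply takes $p_1=p$, and otherwise one lowers $\frac{1}{p_1}$ slightly below $\frac1p$, using the strict conditions $\frac1p > \frac{d+1}{2d}$ and $\frac1p-\frac1q > \frac{2}{d+1}$ (the latter holding at the endpoint $\frac1p-\frac1q = \frac{2s}{d}$ since $s>\frac{d}{d+1}$) to remain within \eqref{eq:pq2}.
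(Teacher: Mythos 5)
Your proposal is correct and takes essentially the same route as the paper: both factor the perturbed resolvent through the free one (the paper writes $\mathcal R^s_{V,\lambda} = \mathcal R^s_\lambda (I - V\mathcal R^s_\lambda)^{-1} = (I-\mathcal R^s_\lambda V)^{-1}\mathcal R^s_\lambda$, and you use the second form), use H\"older together with the compact support of $V$ to make multiplication by $V$ a bounded map between the relevant Lebesgue exponents, invoke the free estimate \eqref{resol_frac_2} of Proposition \ref{prop5} for decay in $\lambda$, and close via a Neumann series with $\|(I-\mathcal R^s_\lambda V)^{-1}\|\leq 2$ once $\lambda>\lambda_0$. The one genuine difference is in your favor: the paper applies \eqref{resol_frac_2} with the \emph{original} pair $(p,q)$ inside the perturbative factor, so its smallness condition $C|\lambda_0|^{\frac{d}{2s}(\frac1p-\frac1q)-1}\|V\|_{L^\infty}<1$ can be met by enlarging $\lambda_0$ only when $\frac1p-\frac1q<\frac{2s}{d}$ strictly; at the endpoint $\frac1p-\frac1q=\frac{2s}{d}$, which \eqref{les_con} permits, the exponent vanishes and the paper's argument as written degenerates to requiring $C\|V\|_{L^\infty}<1$, i.e.\ the smallness hypothesis of Proposition \ref{resolvent_frac-schro} rather than high frequency. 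Your intermediate exponent $p_1$, chosen strictly inside the admissible region \eqref{eq:pq2} — feasible exactly because $s>\frac{d}{d+1}$ opens the gap $\frac{2}{d+1}<\frac{2s}{d}$, and the strict conditions $\frac1p>\frac{d+1}{2d}$, $\frac1q<\frac{d-1}{2d}$ leave room to perturb — repairs this and yields the contraction for the full range \eqref{les_con}. The only caveat, which you flag yourself, is the proposition's borderline hypothesis $s=\frac{d}{d+1}$: there the admissible region collapses to $\frac1{p_1}-\frac1q=\frac{2}{d+1}=\frac{2s}{d}$, so neither your argument nor the paper's gives \eqref{resol_frac_schro} without smallness of $V$; since the paper's main theorems assume $s>\frac{d}{d+1}$ strictly, this does not affect their use of the proposition.
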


\begin{proof}
Let us denote 
\begin{equation}
	\mathcal{R}_{V,\lambda}^{s} = ((-\Delta)^{s} - V(x) - \lambda)^{-1}.
\end{equation}
Then it can be realized as 
\begin{equation} \label{resol_schr}
	\mathcal{R}_{V,\lambda}^{s} =
	\begin{cases} \mathcal R_\lambda^s (I - V \mathcal R_\lambda^s)^{-1} \\
		(I - \mathcal R_\lambda^s V)^{-1} \mathcal R_\lambda^s,
	\end{cases}
\end{equation}
where, $\mathcal{R}_\lambda^{s}=((-\Delta)^{s}-\lambda)^{-1}$; whenever $(I - V \mathcal{R}_\lambda^s)^{-1}$ exists.
Let us show that, $(I - V\mathcal{R}_\lambda^{s})^{-1}$ exists. By applying \eqref{resol_frac_2} and H\"{o}lder's inequality one obtains
\begin{align*}
	\|V((-\Delta)^{s}-(\lambda+\mathrm{i}\varepsilon))^{-1}\|_{L^{p}-L^{p}}\leq C|\lambda|^{\frac{d}{2s}(\frac{1}{p}-\frac{1}{q})-1}\|V\|_{L^{\infty}},
\end{align*}
then by choosing $\lambda>\lambda_0$ large enough for some $\lambda_0>0$ satisfying
\[
C|\lambda_0|^{\frac{d}{2s}(\frac{1}{p}-\frac{1}{q})-1}\|V\|_{L^{\infty}} <1,
\]
we can have
\begin{align} \label{resol_frac_schr2}
	\|(I - V((-\Delta)^s - (\lambda+\mathrm{i}\varepsilon))^{-1})^{-1}\|_{L^{q}-L^{q}}
	\leq 2.
\end{align}
Hence the claim follows by combining \eqref{resol_frac_schr2}, \eqref{resol_schr}, and \eqref{resol_frac_2}.
\end{proof}

As a remark, we mention that in the case of classical Schr\"{o}dinger operator $H=-\Delta+V$ due to Agmon \cite{AgSh}, known as {\it the limiting absorption principle}, we have the following resolvent estimate 
\begin{align}
\sup_{0<\varepsilon<1}\|(-\Delta+V-(\lambda+i\varepsilon))^{-1}\|_{L^{2, \delta}-L^{2, -\delta}}\leq C(\lambda_0)\lambda^{-\frac{1}{2}}, ~~~\lambda>\lambda_0>0,
\end{align}
where  $|V(x)|\leq C(1+|x|)^{-1-}$ and $\delta>\frac12$ and  
$L^{2, \delta}$ for $-\infty<\delta<\infty$, is the usual weighted $L^2$ space defined as the completion of $C^\infty_c(\mathbb{R}^d)$  w.r.t.~the norm $\|f\|_{L^{2, \delta}} := \|(1+|x|^2)^\frac{\delta}{2}\,f\|_{L^2}$. 

Thanks to Goldberg and Schlag \cite{GoSc}, we also have a $L^p$ version of the limiting absorption principle for the three dimensional Schr\"{o}dinger operators. More specifically, for any given $\lambda_0>0$, they proved
\begin{align}
\sup_{0<\varepsilon<1}\|(-\Delta+V-(\lambda+i\varepsilon))^{-1}\|_{L^{\frac43}(\mathbb{R}^3)-L^4(\mathbb{R}^3)}\leq C(\lambda_0)\lambda^{-\frac14}, ~~~\lambda>\lambda_0,
\end{align}
where  $V\in L^p(\mathbb{R}^3) \cap L^{\frac32}(\mathbb{R}^3)$, $p>\frac32$.

\section{Direct problem}

\subsection{Direct problem}

Thanks to the discussion in the previous section, now we have the solvability of the fractional Schr\"{o}dinger equation
$((-\Delta)^s u - k^{2s} - V(x)) u = 0$ in $\mathbb{R}^d$ in certain $L^q$ spaces and for certain $s\in (0,1)$.  Specifically, we state 
\begin{thm}\label{dir_pb}
	Let $d\geq 3$, $\frac{d}{d+1}\leq s<\frac{d}{2}$, and $V\in L^{\infty}(\mathbb{R}^d)$ with compact support $\Omega \subset \mathbb{R}^d$.
	Let us fix some $1<p<q<\infty$ satisfying \eqref{les_con}. Then there exists a $k_0>0$, such that for $k>k_0$, the scattered field $u^{\mathrm{sc}}_g$ corresponding to the incident field
\begin{equation*} 
	u^{\mathrm{in}}_g = \int_{\mathbb{S}^{d-1}} e^{\mathsf{i}kx \cdot \theta} g(\theta) \dif s(\theta)
\end{equation*}
exists in $L^q(\mathbb{R}^d)$ as the outgoing limiting resolvent solution of
\begin{equation} \label{frac_scat_direct}
	\begin{aligned}
		\LC(-\Delta)^s - k^{2s} - V(x) \RC u^{\mathrm{sc}}_g & = V(x)u^{\mathrm{in}}_g  \in L^p (\mathbb{R}^d).
	\end{aligned}
\end{equation}
Moreover, there is a uniform constant $C_{p,q}>0$ such that 
\begin{equation}\label{estimate_usc}
	||u^{\mathrm{sc}}_g||_{L^{q}(\mathbb{R}^{d})}
	\leq C_{p,q}\,k^{d(\frac{1}{p}-\frac{1}{q})-2s}||Vu^{\mathrm{in}}_g||_{L^{\infty}(\Omega)}.
\end{equation}
Hence, the total field $u_g = u^{\mathrm{in}}_g+u^{\mathrm{sc}}_g \in  L^\infty+ L^q$ is the outgoing limiting resolvent solution of $-(-\Delta)^s u+ \LC k^{2s}+ V(x)\RC u=0$ in $\mathbb{R}^d$.
\end{thm}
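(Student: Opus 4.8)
The plan is to recast \eqref{frac_scat_direct} as a single inhomogeneous fractional Helmholtz equation with potential, $(H - k^{2s})u^{\mathrm{sc}}_g = f$ where $H = (-\Delta)^s - V$ and $f := V u^{\mathrm{in}}_g$, and then to extract existence, the uniform bound \eqref{estimate_usc}, and the radiation condition directly from the resolvent machinery of Propositions~\ref{prop5}, \ref{local22} and \ref{resolvent_frac-schro2}. First I would record the nature of the source: since $g \in L^2(\mathbb{S}^{d-1})$, the Cauchy--Schwarz inequality on the sphere shows the Herglotz field $u^{\mathrm{in}}_g$ is bounded on $\R^d$, so together with $V \in L^\infty$ supported in $\Omega$ the source $f = V u^{\mathrm{in}}_g$ lies in $L^\infty(\Omega)$ with compact support. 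Consequently $f \in L^p(\R^d)$ for every $p$, and Hölder's inequality on $\Omega$ gives $\|f\|_{L^p(\R^d)} \leq |\Omega|^{1/p}\,\|V u^{\mathrm{in}}_g\|_{L^\infty(\Omega)}$, which is exactly the quantity appearing on the right-hand side of \eqref{estimate_usc}.

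For existence and the estimate I would fix $k_0$ so that $\lambda_0 := k_0^{2s}$ is the threshold furnished by Proposition~\ref{resolvent_frac-schro2}. For $k > k_0$ put $\lambda = k^{2s} > \lambda_0$ and define the outgoing scattered field by limiting absorption, $u^{\mathrm{sc}}_g := \lim_{\varepsilon \to 0^+} (H - (k^{2s} + \mathrm{i}\varepsilon))^{-1} f$. Evaluating the uniform bound \eqref{resol_frac_schro} at $\lambda = k^{2s}$ turns its exponent $\frac{d}{2s}(\tfrac1p - \tfrac1q) - 1$ into $d(\tfrac1p - \tfrac1q) - 2s$, so that $\|u^{\mathrm{sc}}_g\|_{L^q} \leq C\,k^{d(\frac1p - \frac1q) - 2s}\,\|f\|_{L^p}$; combined with the Hölder bound on $\|f\|_{L^p}$ this yields \eqref{estimate_usc}, while the $\varepsilon$-uniformity of \eqref{resol_frac_schro} guarantees that the limit genuinely lands in $L^q(\R^d)$.

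The radiation condition and uniqueness form the technical heart. Using the factorization \eqref{resol_schr}, I would write $u^{\mathrm{sc}}_g = \mathcal R^s_{k^{2s}}\, g$ with $g := (I - V \mathcal R^s_{k^{2s}})^{-1} f \in L^p$, exhibiting $u^{\mathrm{sc}}_g$ as the outgoing free fractional resolvent applied to an $L^p$ source; the local $L^2$ bounds \eqref{local23}--\eqref{local24} of Proposition~\ref{local22}, on which the formulation \eqref{uniq_frac} rests, then certify that $u^{\mathrm{sc}}_g$ satisfies the Sommerfeld condition. For uniqueness, a difference $w \in L^q$ of two admissible solutions solves the homogeneous problem with radiation condition; since $Vw$ has compact support it lies in $L^p$, so the free radiation-condition uniqueness identifies $w$ with the outgoing resolvent of its own source, $(I - \mathcal R^s_{k^{2s}} V)\, w = 0$. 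Enlarging $k_0$ if necessary so that $\|\mathcal R^s_{k^{2s}} V\|_{L^q \to L^q} < 1$, which is precisely the smallness exploited in the proof of Proposition~\ref{resolvent_frac-schro2}, forces $w = 0$. Finally, adding $u^{\mathrm{in}}_g$, which solves the free equation $((-\Delta)^s - k^{2s})u^{\mathrm{in}}_g = 0$, cancels the source and shows $u_g = u^{\mathrm{in}}_g + u^{\mathrm{sc}}_g \in L^\infty + L^q$ solves the homogeneous fractional Schrödinger equation.

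I expect the main obstacle to be exactly this radiation/uniqueness step. Unlike the classical case there is no pointwise Sommerfeld asymptotics or Rellich lemma available for $(-\Delta)^s$, so the argument must be carried out entirely through the weak $L^2$-averaged condition \eqref{uniq_frac} and the limiting-absorption characterization of the outgoing branch; in particular one has to verify carefully that the $\varepsilon \to 0^+$ limit of $(H - (k^{2s} + \mathrm{i}\varepsilon))^{-1} f$ selects the outgoing (``$+$'') sign and is the unique solution compatible with \eqref{uniq_frac}, rather than merely \emph{a} solution of the equation.
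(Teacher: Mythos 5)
Your proposal is correct and follows essentially the same route as the paper, which states Theorem \ref{dir_pb} as a direct assembly of the preceding discussion: limiting absorption with the perturbed resolvent bound of Proposition \ref{resolvent_frac-schro2} at $\lambda = k^{2s}$ (whose exponent $2s\bigl[\tfrac{d}{2s}(\tfrac1p-\tfrac1q)-1\bigr] = d(\tfrac1p-\tfrac1q)-2s$ gives \eqref{estimate_usc} after the H\"older step $\|Vu^{\mathrm{in}}_g\|_{L^p} \lesssim \|Vu^{\mathrm{in}}_g\|_{L^\infty(\Omega)}$), the factorization \eqref{resol_schr} with a Neumann series for large $k_0$, and the Shen--Zhang radiation-condition framework (Proposition \ref{local22} and \eqref{uniq_frac}) for uniqueness of the outgoing branch. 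Your closing caveat about verifying that the $\varepsilon \to 0^+$ limit selects the outgoing solution is handled in the paper exactly as you suggest, by deferring to \cite{Shen2025complex}*{Theorem 1.2}.
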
 
This result concludes the discussion of the direct problem of fractional Schr\"{o}dinger equation \eqref{eq:1}, next we introduce \emph{scattering amplitude} and the related inverse problem. 

\subsection{Scattered field and scattering amplitude} 

Let us recall (see \eqref{1.5}) that the \emph{scattered field} is given by
\begin{equation}\label{sct_fld}
u^{\mathrm{sc}}_g(x)=\int_{\mathbb{R}^d}\Phi_s(x,y)V(y)u_g(y)\,\mathrm{d} y, \ \ \text{ for } x \in \mathbb{R}^d.
\end{equation}
Let us further recall from \eqref{resols1}  that 
\begin{align}
	\Phi_{\pm,\, s}(x-y)
	&  =\frac{k^{2(1-s)}}{s}\Phi_{\pm,\, 1}(x-y) \\
	& \quad + \frac{\sin{s\pi}}{\pi}\int_{0}^{\infty}\frac{\lambda^{s}(\lambda-\Delta)^{-1}\delta_y(x)} {\lambda^{2s}-2\lambda^{s}k^{2s}\cos{s\pi}+k^{4s}}\,  \mathrm{d}\lambda,
\end{align}
and we are interested in providing a decay for the second term when $|x|\to \infty.$
In particular, we want to show
\begin{equation}\label{sasy}
\Phi_{\pm,\, s}(x)\sim \frac{k^{2(1-s)}}{s}\Phi_{\pm,\,1}(x),  
\end{equation}
where $f\sim g$ is defined through $L^2$ integrability at infinity, i.e.,
\begin{equation} \label{decay} f\sim g \Longleftrightarrow \lim_{R\to\infty} \frac{1}{R}\int_{1<|x|< R}|f(x)-g(x)|^2\,\mathrm{d} x=0.
\end{equation}
By using the following estimates
\begin{equation}
\big| (\lambda-\Delta)^{-1}\delta_y(x) \big| \leq
\left\{\begin{array}{cl}
	C\lambda^{\frac{d-2}{4}}|x-y|^{-\frac{d-2}{2}}e^{-\sqrt{\lambda}|x-y|}, \, \sqrt{\lambda}|x-y|>1, \\
	C|x-y|^{2-d}, \, \sqrt{\lambda}|x-y|\leq 1,
\end{array}\right.
\end{equation}
a direct computation which can be found in \cite{HYZ}*{page 12-13} yields 
\begin{align*}
\int_{0}^{\infty} \Big|\, \frac{\lambda^{s}(\lambda-\Delta)^{-1}\delta_y(x)}
{\lambda^{2s}-2\lambda^{s}k^{2s}\cos{s\pi}+k^{4s}}\, \Big|\,\mathrm{d}\lambda\leq C|x-y|^{-d-2s}, \quad \text{if}\,\, |x-y|>1.
\end{align*}
Hence \eqref{sasy} follows. Further by taking \eqref{31asy} into account, we have
\begin{align}\label{sasy2}
\Phi_{\pm,\, s}(x)\sim \frac{k^{2(1-s)}}{s}\,k^{\frac{d-3}{2}}\frac{e^{-\frac{\mathrm{i}\pi(d-3)}{4}}}{2^{\frac{d+1}{2}}\,\pi^{\frac{d-1}{2}}}\,\,\frac{e^{\pm \mathsf{i}k|x|}}{|x|^{\frac{d-1}{2}}}.
\end{align}
\begin{prop}
\label{prop1}
Let $d\geq 2$ and $0 < s<  1$. Then the fundamental solution $\Phi_{\pm,\, s}(x)$ of the fractional Helmholtz equation
$\LC (-\Delta)^s - k^{2s}\RC \Phi_{\pm,\, s}(x) = \delta_0(x)$  in  $\mathbb{R}^d$,  satisfies \eqref{sasy2}, in the sense of \eqref{decay}.
\end{prop}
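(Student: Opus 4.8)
The plan is to start from the resolvent decomposition \eqref{resols1}, which writes $\Phi_{\pm,\,s}$ as the scaled classical Helmholtz fundamental solution $\frac{k^{2(1-s)}}{s}\Phi_{\pm,\,1}$ plus a remainder given by the $\lambda$-integral of $\lambda^s(\lambda-\Delta)^{-1}\delta_0$ against the rational weight. The idea is to show the remainder is negligible in the sense of \eqref{decay}, so that \eqref{sasy} holds, and then to substitute the known leading behavior of $\Phi_{\pm,\,1}$ from \eqref{31asy} to arrive at \eqref{sasy2}.

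First I would estimate the remainder pointwise. Setting $y=0$ in the Bessel-potential bounds stated above, I split the $\lambda$-integral at $\sqrt{\lambda}|x|=1$: on $\sqrt{\lambda}|x|\le 1$ one uses $|(\lambda-\Delta)^{-1}\delta_0(x)|\le C|x|^{2-d}$ and integrates the weight up to $\lambda=|x|^{-2}$, while on $\sqrt\lambda|x|>1$ one uses the exponentially decaying bound and integrates from $\lambda=|x|^{-2}$. Here it is important that the denominator never vanishes: writing $\mu=\lambda^s$, one has $\lambda^{2s}-2\lambda^sk^{2s}\cos{s\pi}+k^{4s}=(\mu-k^{2s}\cos{s\pi})^2+k^{4s}\sin^2{s\pi}\ge k^{4s}\sin^2{s\pi}>0$ for $0<s<1$, so the weight $\lambda^s/(\cdots)$ behaves like $\lambda^s$ near $\lambda=0$ and like $\lambda^{-s}$ as $\lambda\to\infty$, making the split-and-integrate argument (carried out in \cite{HYZ}*{page 12-13}) produce the bound $\le C|x|^{-d-2s}$ for $|x|>1$.

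It then remains to verify that this remainder and the classical error $\mathcal{O}(|x|^{-(d+1)/2})$ in \eqref{31asy} both satisfy $f\sim 0$. This is a routine computation: a function bounded by $|x|^{-\beta}$ at infinity satisfies $f\sim 0$ exactly when $\beta>\frac{d-1}{2}$, since $\frac1R\int_{1<|x|<R}|x|^{-2\beta}\,\mathrm{d}x$ is comparable to $R^{d-1-2\beta}$ (up to a logarithm at equality) and tends to zero in that range. Both exponents $d+2s$ and $\frac{d+1}{2}$ exceed $\frac{d-1}{2}$, so both contributions vanish, which yields \eqref{sasy} and, after substituting the explicit leading term of \eqref{31asy}, the desired equivalence \eqref{sasy2}. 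The main obstacle is the remainder decay estimate: extracting the sharp exponent $|x|^{-d-2s}$ requires balancing the heat-kernel bounds against the rational weight carefully across the two $\lambda$-regimes, whereas the final $L^2$-at-infinity checks are elementary.
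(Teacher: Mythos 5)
Your proposal is correct and takes essentially the same route as the paper's own argument: the decomposition \eqref{resols1}, the pointwise bounds on $(\lambda-\Delta)^{-1}\delta_y$ split at $\sqrt{\lambda}|x|=1$ yielding (via the computation in \cite{HYZ}) the remainder bound $C|x|^{-d-2s}$ for $|x|>1$, and then substitution of \eqref{31asy}. The details you supply beyond the paper's text --- the non-vanishing of the denominator via $\lambda^{2s}-2\lambda^{s}k^{2s}\cos{s\pi}+k^{4s}=\left(\lambda^{s}-k^{2s}\cos{s\pi}\right)^{2}+k^{4s}\sin^{2}{s\pi}>0$ for $0<s<1$, and the elementary verification that decay $|x|^{-\beta}$ with $\beta>\frac{d-1}{2}$ forces $f\sim 0$ in the sense of \eqref{decay}, which covers both the remainder and the $\mathcal{O}(|x|^{-\frac{d+1}{2}})$ error in \eqref{31asy} --- are accurate and simply make explicit what the paper leaves to the cited reference.
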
 
As an application, the integral equation \eqref{sct_fld} can be simplified to \begin{equation}\label{lp}
u^{\mathrm{sc}}_g(x)\sim\frac{k^{2(1-s)}}{s}\,k^{\frac{d-3}{2}}\frac{e^{-\frac{\mathsf{i}\pi(d-3)}{4}}}{2^{\frac{d+1}{2}}\,\pi^{\frac{d-1}{2}}}\,\int_{\mathbb{R}^d}\frac{e^{\pm \mathrm{i}k|x-y|}}{|x-y|^{\frac{d-1}{2}}}\,V(y)u_g(y)\,\mathrm{d}y.
\end{equation}
Let us introduce the function $u^{\infty}_g$ to be called the \emph{scattering amplitude} or the \emph{far field pattern}, which is of the form
\begin{equation}
u^{\infty}_g(\hat x)=\int_{\mathbb{R}^d}e^{-\mathsf{i}k\hat x\cdot y}V(y) u_g(y)\,\mathrm{d}y, \quad \hat x \in \mathbb{S}^{d-1}. \label{1.8}
\end{equation}
Subsequently, a standard calculation on the l.h.s. of \eqref{lp}, which can be found in \cite{GE}*{page 92}, leads to the following relation linking the scattered field and the scattering amplitude: 
\begin{equation}
u^{\mathrm{sc}}_g(x)\sim \frac{k^{2(1-s)}}{s}\,k^{\frac{d-3}{2}}\frac{e^{-\frac{\mathsf{i}\pi(d-3)}{4}}}{2^{\frac{d+1}{2}}\,\pi^{\frac{d-1}{2}}}\,\,\frac{e^{ \pm\mathsf{i}k|x|}}{|x|^{\frac{d-1}{2}}}\, u^{\infty}_g(\hat x). \label{1.7}
\end{equation}
Then the total field $u_g$ solving \eqref{eq:1} satisfies
\begin{equation}\label{asmp50}
u_g(x) \sim u^{\mathrm{in}}_g(x)+ \frac{k^{2(1-s)}}{s}\,k^{\frac{d-3}{2}}\frac{e^{-\frac{\mathsf{i}\pi(d-3)}{4}}}{2^{\frac{d+1}{2}}\,\pi^{\frac{d-1}{2}}}\,\,\frac{e^{ \pm\mathsf{i}k|x|}}{|x|^{\frac{d-1}{2}}}\, u^{\infty}_g(\hat x).
\end{equation} 
It is noted here that the scattering amplitude $u^{\infty}_g(\hat x)$ corresponds to the Herglotz wave as an incident field, and further it can be readily seen as 
\begin{equation}
u^{\infty}_g(\hat x)=\int_{\mathbb{S}^{d-1}}u^{\infty}(k,\hat x,\theta)g(\theta)\, ds(\theta), \quad\text{ for } \hat x:=\frac{x}{|x|}\in \mathbb{S}^{d-1}, \ g \in L^{2}(\mathbb{S}^{d-1}), \label{sc_amp_H}
\end{equation}    
where  for $(k, \hat x, \theta) \in (0,\infty)\times\mathbb{S}^{d-1}\times \mathbb{S}^{d-1}$,
\begin{equation}\label{sc_amp_3}
u^{\infty}(k,\hat x, \theta)=\int_{\mathbb{R}^d}e^{-\mathsf{i}k\hat x\cdot y}V(y)\underbrace{\left( e^{iky\cdot\theta}+ u^{\mathrm{sc}}(y)\right)}_{u(y)}\,\mathrm{d}y,
\end{equation}
the scattering amplitude corresponds to the incident plane waves $e^{\mathrm{i}kx \cdot \theta}$.

\section{Inverse problem}

Now we discuss the inverse problem that we are interested in. 
Let $V\in L^\infty(\Omega)$ where $\Omega$ is a bounded domain in $\mathbb{R}^d$ with $C^\infty$-smooth boundary, $d\geq 3$,
such that $\mathbb{R}^d\setminus\Omega$ is connected. Extending $V(x)$ by zero outside of $\Omega$ we 
consider the fractional inverse scattering problem in $\mathbb{R}^d$ for the equation \eqref{eq:1}, i.e., the problem of recovering $V(x)$ from the scattering amplitude $u^{\infty}(k,\hat x, \theta)$ (cf.~\eqref{sc_amp_3}). 

\medskip
Let $u^{\infty}_j(k,\hat x, \theta)$ be the scattering amplitude for the total field $u_{j}$ solving 
\begin{equation} \label{1.9}
	-(-\Delta)^s u_{j}+\LC k^{2s} +V_j(x) \RC u_{j}=0  \text{ in }  \mathbb{R}^d, \quad j=1,2,
\end{equation}
where
\begin{equation*}
	u_{j}(x)=e^{\mathrm{i}kx\cdot\theta}+u^{\mathrm{sc}}_{j}(x) \text{ in }\R^d,
\end{equation*}
and 
\begin{equation} \label{1.72}
	u^{\mathrm{sc}}_j(x)\sim \frac{k^{2(1-s)}}{s}\,k^{\frac{d-3}{2}}\frac{e^{-\frac{\mathsf{i}\pi(d-3)}{4}}}{2^{\frac{d+1}{2}}\,\pi^{\frac{d-1}{2}}}\,\,\frac{e^{ \mathsf{i}k|x|}}{|x|^{\frac{d-1}{2}}}\, u^{\infty}_j(k,\hat x, \theta).
\end{equation}
Suppose that
\(
u^{\infty}_1(k,\hat x, \theta)=u^{\infty}_2(k,\hat x, \theta),
\)
then we would like to study whether it implies $V_1 = V_2$.

\subsection{Rellich Lemma \& Local Problem}

For readers' convenience we restate the classical Rellich lemma which can be found in e.g.~\cite{GE}*{Lemma 35.2}.

\begin{lem} \label{Rellich}
Let $k\neq 0$, and let $f\in L^2(\mathbb{R}^d)$ have compact support.
$u\in L^2(\mathbb{R}^d)$ solves
\begin{equation}
	\Delta u + k^{2}u =f, \quad\mbox{in }\mathbb{R}^d,
\end{equation}
Then $u$ has compact support, and $\supp u\subseteq \supp f$.
\end{lem}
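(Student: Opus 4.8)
The plan is to run the classical Rellich uniqueness argument in the exterior of $\supp f$ and then propagate the vanishing inward by unique continuation. Set $K:=\supp f$ and fix $R>0$ with $K\subset B_R:=\{x\in\R^d:|x|<R\}$. On the exterior $\{|x|>R\}$ the right-hand side vanishes, so $u$ solves the homogeneous Helmholtz equation $\Delta u+k^2u=0$; since $\Delta+k^2$ is elliptic with constant (hence real-analytic) coefficients, analytic hypoellipticity gives that $u$ is real-analytic on $\R^d\setminus K$.

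First I would record the Rellich decay hidden in the $L^2$ hypothesis. Passing to polar coordinates $x=\rho\omega$, $\rho>0$, $\omega\in\mathbb S^{d-1}$, the bound $\int_{\R^d}|u|^2<\infty$ gives in particular $\int_R^\infty\big(\int_{\mathbb S^{d-1}}|u(\rho\omega)|^2\,dS(\omega)\big)\rho^{d-1}\,d\rho<\infty$, a fact I will use coefficientwise below.

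The core is the Rellich step. On $\{|x|>R\}$ I expand $u$ in an orthonormal basis $\{Y_n^m\}$ of spherical harmonics of $L^2(\mathbb S^{d-1})$, writing $u(\rho\omega)=\sum_{n,m}u_n^m(\rho)Y_n^m(\omega)$. Separation of variables forces each radial coefficient $u_n^m$ to solve the Bessel-type ODE attached to the Helmholtz operator, so $u_n^m(\rho)=\alpha_n^m\,g_n^+(k\rho)+\beta_n^m\,g_n^-(k\rho)$, where $g_n^\pm$ are the two independent (Hankel-type) radial solutions with asymptotics $g_n^\pm(k\rho)\sim c_n\,\rho^{-(d-1)/2}e^{\pm ik\rho}$ as $\rho\to\infty$ — the same outgoing/incoming rate seen in \eqref{31asy}. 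By orthonormality,
\[
\rho^{d-1}|u_n^m(\rho)|^2 = |c_n|^2\big(|\alpha_n^m|^2+|\beta_n^m|^2+2\,\mathrm{Re}(\alpha_n^m\overline{\beta_n^m}\,e^{2ik\rho})\big)+o(1).
\]
For each fixed $(n,m)$ one has $\int_R^\infty|u_n^m(\rho)|^2\rho^{d-1}\,d\rho\le\int_{|x|>R}|u|^2<\infty$; since the oscillatory term $e^{2ik\rho}$ has vanishing average, finiteness of this integral forces the constant part $|c_n|^2(|\alpha_n^m|^2+|\beta_n^m|^2)$ to be zero, i.e.\ $\alpha_n^m=\beta_n^m=0$ for all $n,m$. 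Hence $u\equiv0$ on $\{|x|>R\}$.

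Finally, $u$ vanishes on the nonempty open set $\{|x|>R\}$ and is real-analytic on the connected open set $\R^d\setminus K$ (here I use that $\R^d\setminus K$ is connected, as in the intended application); by analytic/unique continuation $u\equiv0$ throughout $\R^d\setminus K$, which is exactly $\supp u\subseteq K=\supp f$. I expect the Rellich step to be the main obstacle: it requires the Bessel/Hankel asymptotics in general dimension $d$ and a careful treatment of the oscillatory cross terms, which I would dispatch via the radial-integrability argument above (alternatively via the Green's-identity relation $\mathrm{Im}\int_{|x|=\rho}\bar u\,\partial_\rho u\,dS=\mathrm{const}$, valid since $k$ is real) rather than through pointwise limits.
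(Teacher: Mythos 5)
Your argument is correct, and it is worth noting that the paper contains no proof of this lemma at all: it is stated ``for readers' convenience'' with a citation to \cite{GE}*{Lemma 35.2}, so there is no internal proof to compare against. What you wrote is the classical Rellich chain of reasoning standing behind that citation: elliptic (analytic) regularity in the exterior, spherical-harmonics decomposition, the Hankel-type radial asymptotics $\rho^{-(d-1)/2}e^{\pm \mathrm{i}k\rho}$, the observation that the oscillatory cross term has vanishing average so that $L^2$-integrability over $\{|x|>R\}$ kills the constant term coefficientwise, and finally unique continuation of a real-analytic function. Each step is sound; the coefficientwise treatment correctly sidesteps any need for uniformity in $(n,m)$, and the alternative via $\mathrm{Im}\int_{|x|=\rho}\bar u\,\partial_\rho u\,\mathrm{d}S$ being constant is the standard equivalent route.

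One point you flagged in passing deserves promotion to a genuine observation: the connectedness of $\R^d\setminus\supp f$ is not a convenience but is \emph{necessary}, and the lemma as stated in the paper is in fact slightly too strong without it. Take $v(x)=e^{\mathrm{i}kx_1}$ and $\chi\in C_c^\infty(B_1)$ with $\chi=1$ on $B_{1/2}$, and set $u=\chi v\in L^2(\R^d)$; then $f:=\Delta u+k^2u=2\nabla\chi\cdot\nabla v+v\,\Delta\chi$ is supported in an annulus $\{1/2\le|x|\le r_0\}$, while $u=v\neq 0$ on $B_{1/2}$, so $\supp u\not\subseteq\supp f$. The correct general conclusion --- and exactly what your proof establishes --- is that $u$ vanishes on the unbounded connected component of $\R^d\setminus\supp f$. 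This weaker statement is also all the paper ever uses: in Lemma \ref{lem:w12} the source is supported in $\overline{\Omega}$ with $\R^d\setminus\Omega$ connected (as assumed at the start of Section 3), and the conclusion drawn there is only that $w_1-w_2=0$ in $\R^d\setminus\overline{\Omega}$. So your proof is not merely correct; its explicit caveat repairs a small imprecision in the statement as printed.
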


\begin{lem} \label{lem:w12}
Let $w_j~(j=1,2)$ be the solution of 
\begin{equation}\label{uv}
	\Delta w_{j}+  k^{2} w_{j} =-V_ju_{j}  \quad\text{ in } \mathbb{R}^d,
\end{equation}
where $u_{j}$'s are the total fields solving \eqref{1.9}, and $V_j$'s are the bounded potentials with supports in $\overline{\Omega}$.
Then $u^{\infty}_1(k,\hat x,\theta) = u^{\infty}_2(k,\hat x,\theta)$ for all $\hat x \in \mathbb S^{d-1}$ implies $w_1 - w_2 \in L^2(\mathbb R^d)$ with compact support.
\end{lem}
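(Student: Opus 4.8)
The plan is to realise $w:=w_1-w_2$ as an outgoing solution of the classical Helmholtz equation driven by a compactly supported $L^2$ source whose far-field pattern vanishes, and then to invoke Rellich's lemma (Lemma~\ref{Rellich}). As dictated by the scattering framework, each $w_j$ is the \emph{outgoing} solution of \eqref{uv}, i.e.~$w_j=\Phi_{+,1}*(V_ju_j)$; hence $w$ solves
\begin{equation*}
\Delta w + k^2 w = -(V_1 u_1 - V_2 u_2) \quad \text{in } \mathbb{R}^d ,
\end{equation*}
with right-hand side supported in $\overline{\Omega}$. Since the constraint $\frac1q<\frac{d-1}{2d}$ in \eqref{les_con} forces $q>2$, each $u^{\mathrm{sc}}_j\in L^q(\mathbb{R}^d)\subset L^2(\Omega)$, so $u_j=e^{\mathsf{i}kx\cdot\theta}+u^{\mathrm{sc}}_j\in L^2(\Omega)$ and the source $V_1u_1-V_2u_2\in L^2(\mathbb{R}^d)$ has compact support. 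Consequently Lemma~\ref{Rellich} applies as soon as we know $w\in L^2(\mathbb{R}^d)$, and this $L^2$ membership is the crux of the argument.

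To establish $w\in L^2(\mathbb{R}^d)$ I would exploit the hypothesis $u^{\infty}_1=u^{\infty}_2$ through the far-field expansion of each $w_j$. Plugging the asymptotics \eqref{31asy} of $\Phi_{+,1}$ into $w_j=\Phi_{+,1}*(V_ju_j)$ and using $|x-y|=|x|-\hat x\cdot y+O(|x|^{-1})$ for $y\in\Omega$, the density $V_ju_j\in L^1(\Omega)$ integrates to
\begin{equation*}
w_j(x)=k^{\frac{d-3}{2}}\frac{e^{-\frac{\mathsf{i}\pi(d-3)}{4}}}{2^{\frac{d+1}{2}}\pi^{\frac{d-1}{2}}}\frac{e^{\mathsf{i}k|x|}}{|x|^{\frac{d-1}{2}}}\,u^{\infty}_j(k,\hat x,\theta)+O\!\big(|x|^{-\frac{d+1}{2}}\big),\quad |x|\to\infty ,
\end{equation*}
where the leading coefficient is precisely $u^{\infty}_j(k,\hat x,\theta)=\int_{\Omega}e^{-\mathsf{i}k\hat x\cdot y}V_j(y)u_j(y)\,\mathrm{d}y$ from \eqref{sc_amp_3}. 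Subtracting the two expansions, the equality of the scattering amplitudes cancels the leading $|x|^{-(d-1)/2}$ term and leaves $w(x)=O(|x|^{-(d+1)/2})$ as $|x|\to\infty$.

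Because $\int_{|x|>1}|x|^{-(d+1)}\,\mathrm{d}x<\infty$ and $w\in L^2_{\mathrm{loc}}(\mathbb{R}^d)$ by interior elliptic regularity for the Helmholtz equation, the decay $w=O(|x|^{-(d+1)/2})$ yields $w\in L^2(\mathbb{R}^d)$. With $w\in L^2(\mathbb{R}^d)$ solving $\Delta w+k^2w=-(V_1u_1-V_2u_2)$ and the source compactly supported in $L^2$, Lemma~\ref{Rellich} gives that $w=w_1-w_2$ has compact support, with $\supp w\subseteq\supp(V_1u_1-V_2u_2)\subseteq\overline{\Omega}$, as claimed.

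I expect the main obstacle to be the far-field expansion of $w_j$: one must justify that the remainder in \eqref{31asy}, after the phase change $|x-y|\mapsto|x|-\hat x\cdot y$, is genuinely $O(|x|^{-(d+1)/2})$ \emph{uniformly} in $\hat x\in\mathbb{S}^{d-1}$ and survives integration against $V_ju_j$. This uses only $V_ju_j\in L^1(\Omega)$ with compact support, but the uniform control of the error is the delicate point; note that the individual $w_j\notin L^2(\mathbb{R}^d)$ (they decay merely like $|x|^{-(d-1)/2}$), so the cancellation of the leading term supplied by $u^{\infty}_1=u^{\infty}_2$ is indispensable.
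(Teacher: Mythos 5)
Your proposal is correct and follows essentially the same route as the paper: represent each $w_j$ via the outgoing fundamental solution $\Phi_{+,1}$, extract the far-field expansion whose leading coefficient is $u_j^{\infty}$, use $u_1^{\infty}=u_2^{\infty}$ to cancel the $|x|^{-(d-1)/2}$ term so that $w_1-w_2\in L^2(\mathbb{R}^d)$, and then apply the classical Rellich lemma (Lemma~\ref{Rellich}) to the compactly supported $L^2$ source $V_1u_1-V_2u_2$. The only (cosmetic) difference is that you track a pointwise remainder $O\left(|x|^{-(d+1)/2}\right)$ uniform in $\hat x$, whereas the paper phrases the same cancellation in the averaged sense \eqref{decay}; your version, with the explicit check $q>2$ giving $u_j\in L^2(\Omega)$, is if anything a slightly more careful rendering of the identical argument.
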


\begin{proof}
Using the fundamental solution of the Helmholtz operator, we write down the (outgoing) solution as
\begin{equation}\label{vjg}
	w_{j}(x)=-\int_{\mathbb{R}^d}\Phi_{\pm,1}(x,y)V_j(y)u_{j}(y)\,\mathrm{d} y, \quad\text{ for } x \in \ \mathbb{R}^d
\end{equation}
where $\Phi_{\pm,1}$ is given in \eqref{31} and \eqref{313}, with its asymptotics in \eqref{31asy}. Similar to deriving \eqref{1.72}, we can further infer that
\begin{equation}
	w_{j}(x)\sim -k^{\frac{d-3}{2}}\frac{e^{-\frac{\mathsf{i}\pi(d-3)}{4}}}{2^{\frac{d+1}{2}}\,\pi^{\frac{d-1}{2}}}\,\,\frac{e^{\pm \mathsf{i}k|x|}}{|x|^{\frac{d-1}{2}}}\, u^{\infty}_{j}(k,\hat x,\theta),\quad j=1,2,
\end{equation}
and using the hypothesis $u^{\infty}_{1}=u^{\infty}_{2}$, we have
$w_{1}-w_{2} \in L^2(\mathbb{R}^d)$.

Denote $f := (V_2u_{2}-V_1u_{1}) \in L^2(\mathbb{R}^d)$, which has compact support in $\overline{\Omega}$.
Moreover, $w_{1}-w_{2}$ solves
\begin{equation}\label{h2}
	\Delta (w_{1}-w_{2}) + k^{2} (w_{1}-w_{2}) =f\quad\mbox{in }\mathbb{R}^d,
\end{equation}
so applying Lemma \ref{Rellich} to $(w_{1}-w_{2})$ gives
\begin{equation}\label{w12}
	w_{1}-w_{2} = 0 \quad\mbox{in }\mathbb{R}^d\setminus\overline{\Omega}.
\end{equation}
Let $\Omega\Subset B(0,\sigma)=B$ for some $\sigma>0$. Note that, $(w_{1}-w_{2})\in H^2(B)$ by the elliptic regularity of the solution of \eqref{h2}. Furthermore, thanks to \eqref{w12}, we can conclude $w_1 - w_2 \in H^2_0(B)$.
The proof is done.
\end{proof}

We are ready to prove Theorem \ref{thm:2}.

\begin{proof}[Proof of Theorem \ref{thm:2}]
Let $m\in\mathbb{R}^d$ be arbitrary.
Choose $l \in\mathbb{R}^d$ such that $m \cdot l = 0$ and $|m + l| > k_0$.
Denote
\begin{equation} \label{rho}
	\rho := m+l, \quad
	k := |\rho| = \sqrt{|m|^2 + |l|^2}, \quad
	\theta := \frac {m-l} {|m-l|}.
\end{equation}
It can be checked that $k \in (k_0,+\infty)$ and $\theta \in \mathbb S^{d-1}$.

Let $w_j~(j=1,2)$ solve
\(
(-\Delta - k^2) w_j = V_j u_j
\)
in $\mathbb{R}^d$.
By Lemma \ref{lem:w12} we see $w_1 - w_2 \in L^2(\mathbb R^d)$ with compact support.
Then integration by parts gives
\begin{align}
	\int_{B} e^{\mathrm{i}\rho\cdot x} (V_2 u_2 - V_1 u_1)
	& = \int_{B} e^{\mathrm{i}\rho\cdot x}(\Delta+k^2) (w_1 - w_2) \nonumber \\
	& = \int_{B} (\Delta+k^2) e^{\mathrm{i}\rho\cdot x} (w_1 - w_2)
	= 0. \label{fr_eq}
\end{align}
Substituting $u_j = e^{\mathrm{i}k \theta \cdot x} + u^{\mathrm{sc}}_j$ into \eqref{fr_eq}, we can obtain
\begin{equation} \label{eq:Vu}
	\int_{B} e^{\mathrm{i}m \cdot x} (V_1 - V_2) = -\int_{B} e^{\mathrm{i}\rho\cdot x} (V_1 u^{\mathrm{sc}}_1 - V_2 u^{\mathrm{sc}}_2).
\end{equation}

Next, we are to use Proposition \ref{prop5} to estimate $\|u^{\mathrm{sc}}_j\|~(j=1,2)$.
To use Proposition \ref{prop5}, we need to choose the values of $p,q,s$ particularly so that requirements in Proposition \ref{prop5} are all satisfied.
We set
\begin{equation*}
	\left\{\begin{aligned}
		\frac 1 q & := \frac 1 2 \Big( \frac {d+1-3.8s} {2d} + \frac {d-1} {2d} \Big) = \frac {d - 1.9s} {2d}, \\
		\frac 1 p &:= \frac 1 2 \Big[ \max\{ \frac 2 {d+1} + \frac 1 q, \frac {d+1} {2d} \} + (\frac {2s} d + \frac 1 q) \Big].
	\end{aligned}\right.
\end{equation*}
We fix $d \geq 3$ and choose $s$ from
\[
s \in (\frac{d}{d+1}, \frac{d}{2}).
\]
Here, note that we require $s$ to be strictly larger than $\frac d {d+1}$, which is slightly different from the statement in Proposition \ref{prop5}.
But this is necessary for the convergence we shall see soon.
Under these choices, it can be checked that
\(
\frac {d+1-3.8s} {2d} < \frac {d-1} {2d},
\)
so $\frac 1 q < \frac {d-1} {2d}$.
Moreover, we can show
\(
\frac 2 {d+1} + \frac 1 q < \frac {2s} d + \frac 1 q
\)
and
\(
\frac {d+1} {2d} < \frac {2s} d + \frac 1 q,
\)
so $\frac1 p$ belongs to the open interval $(\frac 2 {d+1} + \frac 1 q, \frac {2s} d + \frac 1 q)$.
Now all the conditions in Proposition \ref{prop5} are satisfied, with an additional benefit that
\begin{equation} \label{eq:ds}
	\frac d 2 (\frac 1 p - \frac 1 q) - s
	= s[\frac d {2s} (\frac 1 p - \frac 1 q) - 1] < 0.
\end{equation}
Recall that $u^{\mathrm{sc}}_j$ satisfies
\[
((-\Delta)^s - k^{2s}) u^{\mathrm{sc}}_j = V_j e^{\mathrm{i}k \theta \cdot x} + V_j u^{\mathrm{sc}}_j.
\]
Denote $\mathcal R_k := ((-\Delta)^s - (k^{2s}+\mathrm{i}0))^{-1}$, then $u^{\mathrm{sc}}_j$ can be represented as Neumann series
\begin{equation} \label{eq:neum}
	u^{\mathrm{sc}}_j
	= (I - \mathcal R_k V_j)^{-1} \mathcal R_k V_j e^{\mathrm{i}k \theta \cdot x}
	= \mathcal R_k \sum_{\ell \geq 0} (V_j \mathcal R_k)^\ell (V_j e^{\mathrm{i}k \theta \cdot x}),
\end{equation}
provided that $\norm{\mathcal R_k V_j}_{L^q-L^q} < 1$.
To validate this, for any $f \in L^q(\R^d)$ we can compute
\begin{align*}
	\norm{\mathcal R_k V_j f}_{L^q-L^q}
	& \leq \norm{\mathcal R_k}_{L^p-L^q} \norm{V_j f}_{L^p}
	\leq \norm{\mathcal R_k}_{L^p-L^q} \norm{V_j}_{L^{pq/(q-p)}} \norm{f}_{L^q} \\
	& \leq C_{p,q} k^{d(\frac 1 p - \frac 1 q) - 2s} \norm{V_j}_{L^\infty} \norm{f}_{L^q},
\end{align*}
where we used \eqref{resol_frac_2}.
Because of \eqref{eq:ds}, we see when $k$ is larger than certain $k_0$, which depends on $p,q,d,s,\norm{V_j}_{L^\infty}$, we can have $\norm{\mathcal R_k V_j}_{L^q-L^q} < 1$.
Therefore, by Proposition \ref{prop5} we can conclude from \eqref{eq:neum} that
\begin{align*}
	\|u^{\mathrm{sc}}_j\|_{L^{q}(\mathbb{R}^{d})}
	& \leq \norm{\mathcal R_k}_{L^p-L^q} \norm{(I - V_j \mathcal R_k)^{-1}}_{L^q-L^q} \norm{V_j e^{\mathrm{i}k \theta \cdot x}}_{L^q(\Omega)} \\
	& \leq \frac {\norm{\mathcal R_k}_{L^p-L^q}} {1 - \norm{V_j \mathcal R_k}_{L^q-L^q}} \norm{V_j}_{L^q(\Omega)} \\
	& \leq C (m^2+l^2)^{\frac d 2 (\frac1 p - \frac 1 q) - s} \|V_j\|_{L^{\infty}(\Omega)}.
\end{align*}
Combining this with \eqref{eq:Vu} we can have
\begin{align*}
	\big| \int_{B} e^{\mathrm{i}m \cdot x} (V_1 - V_2) \big|
	& = \big| \int_{B} e^{\mathrm{i}\rho\cdot x} (V_1 u^{\mathrm{sc}}_1 - V_2 u^{\mathrm{sc}}_2) \big|
	\leq \sum_{j=1}^2 \norm{V_j u^{\mathrm{sc}}_j}_{L^1(\Omega)} \\
	& \leq \sum_{j=1}^2 \norm{V_j}_{L^{q/(q-1)}(\Omega)} \norm{u^{\mathrm{sc}}_j}_{L^q(\R^d)} \\
	& \leq C (m^2+l^2)^{\frac d 2 (\frac1 p - \frac 1 q) - s} (\|V_1\|_{L^{\infty}(\Omega)}^2 + \|V_2\|_{L^{\infty}(\Omega)}^2).
\end{align*}
Because of \eqref{eq:ds}, by taking the length of $l$ to infinity we arrive at
\begin{equation*}
	\int_{B} e^{2\mathrm{i}m\cdot x} (V_1 - V_2) = 0.
\end{equation*}
As $m\in\mathbb{R}^d$ was arbitrary, we conclude that $V_1 = V_2$.
The proof is done.
\end{proof}

\section*{Acknowledgments}

The research of S.~M.~is partially supported by the NSFC under the grant No.~12301540. T.~G.~is grateful to Gunther Uhlmann and Catalin Carstea for discussions at IAS, HKUST.
The authors are grateful to the anonymous reviewers for their helpful feedback.



\vspace{.3cm}
\noindent \textbf{Data Availability:} The authors shall permit all the data underlying the findings of this manuscript to be shared
by any researchers or groups who are interested in the article.

\vspace{.3cm}
\noindent \textbf{Conflict of Interest:} The authors declare that there is no conflict of interest regarding the publication of this paper.

\begin{appendix}

\section{Rellich lemma and solution construction} \label{Appendix}

Rellich's lemma has opposite results for the local and non-local cases.

\begin{lem} \label{Rellich_theorem}
	Let $k > 0$ and $u\in L^2(\mathbb{R}^d)$ be a solution of 
	\begin{equation} \label{eqn1}
		(-\Delta)^s u - k^{2s} u = f \quad \mbox{in} \quad \mathbb{R}^d,
	\end{equation}
	where $f\in L^2(\mathbb{R}^d)$ and has compact support. Then
	\begin{enumerate}
		\item when $s = 1$, $\supp u$ is compact, and $\supp u\subseteq \supp f$.
		
		\item when $0 < s < 1$, $u$ cannot be compactly supported unless $f \equiv 0$.
	\end{enumerate} 
\end{lem}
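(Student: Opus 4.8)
The first assertion is essentially the classical Rellich lemma already recorded above. Indeed, for $s=1$ the equation $(-\Delta)^s u - k^{2s}u=f$ reads $\Delta u + k^2 u = -f$, and since $-f\in L^2(\R^d)$ has compact support, Lemma \ref{Rellich} applies verbatim and yields that $\supp u$ is compact with $\supp u\subseteq \supp(-f)=\supp f$. So here I would simply invoke Lemma \ref{Rellich}, taking care only of the sign.

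The second assertion is where the nonlocal nature of $(-\Delta)^s$ enters, and the plan is to argue on the Fourier side. Assume, for contradiction, that a solution $u\in L^2(\R^d)$ of \eqref{eqn1} has compact support. Since $f$ also has compact support, the Paley--Wiener theorem shows that both $\hat u$ and $\hat f$ extend to entire functions on $\C^d$; in particular their restrictions to $\R^d$ are real-analytic. Taking the Fourier transform of \eqref{eqn1} gives the pointwise identity $(|\xi|^{2s}-k^{2s})\hat u(\xi)=\hat f(\xi)$ on $\R^d$, which I rewrite as $|\xi|^{2s}\hat u(\xi)=\hat f(\xi)+k^{2s}\hat u(\xi)$. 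The right-hand side is real-analytic at the origin, so $G(\xi):=|\xi|^{2s}\hat u(\xi)$ is real-analytic near $\xi=0$. This is the crux: the factor $|\xi|^{2s}$ is not smooth at the origin when $0<s<1$, and I want to extract a contradiction from forcing $G$ to be analytic there.

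The mechanism is a scaling (homogeneous blow-up) argument at the origin. Suppose $\hat u\not\equiv 0$; being a nontrivial entire function it vanishes to some finite order $N$ at $0$, with lowest homogeneous Taylor part a nonzero homogeneous polynomial $p_N$ of degree $N$. For fixed $\xi\neq 0$ and $\lambda\to 0^+$ one then has $G(\lambda\xi)=\lambda^{2s}|\xi|^{2s}\hat u(\lambda\xi)=|\xi|^{2s}p_N(\xi)\,\lambda^{2s+N}+o(\lambda^{2s+N})$. On the other hand, real-analyticity of $G$ at $0$ forces the expansion of $\lambda\mapsto G(\lambda\xi)$ to contain only integer powers of $\lambda$. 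Matching the leading exponent requires $2s+N\in\Z$, which is impossible when $2s\notin\Z$, i.e. for every $s\in(0,1)\setminus\{1/2\}$; hence $p_N\equiv 0$, a contradiction, and so $\hat u\equiv 0$. For the borderline value $s=1/2$ the leading term $|\xi|\,p_N(\xi)$ has integer degree $N+1$, so it must equal the degree-$(N+1)$ homogeneous part of the analytic $G$, namely a genuine homogeneous polynomial $Q$. Squaring the identity $|\xi|\,p_N=Q$ yields $(\xi_1^2+\cdots+\xi_d^2)\,p_N^2=Q^2$ in $\R[\xi_1,\dots,\xi_d]$; but $\xi_1^2+\cdots+\xi_d^2$ is irreducible, so its multiplicity is odd on the left and even on the right, a contradiction. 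Thus $\hat u\equiv 0$ in every case, whence $u\equiv 0$ and consequently $f\equiv 0$.

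The routine ingredients (Paley--Wiener, uniqueness of the homogeneous expansion of an analytic function) are standard; the main obstacle is precisely the borderline exponent $s=1/2$, where naive degree counting fails and one must fall back on the irreducibility/parity argument for $|\xi|^2$. As a cleaner, case-free alternative I could instead deduce part~(2) from the strong unique continuation (antilocality) of the fractional Laplacian: on any open set disjoint from $\supp u\cup\supp f$ one has $u=0$ and $(-\Delta)^s u=k^{2s}u+f=0$ simultaneously, and antilocality then forces $u\equiv 0$ and hence $f\equiv 0$. I would present the Fourier argument as the primary proof, since it matches the Fourier definition of $(-\Delta)^s$ adopted here and pinpoints exactly why the analyticity-based local technique breaks down in the fractional regime.
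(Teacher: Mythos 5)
Your proposal is correct, but for part (2) your primary argument is genuinely different from the paper's. The paper disposes of the fractional case in three lines via strong unique continuation: if $u$ were compactly supported one could pick a nonempty open set $W$ with $u=f=0$ in $W$, hence $(-\Delta)^s u = k^{2s}u+f = 0$ in $W$, and then \cite{GSU20}*{Theorem 1.2} forces $u\equiv 0$ and so $f\equiv 0$ --- exactly the ``antilocality'' alternative you sketch at the end, so you correctly identified the paper's route even though you relegated it to a remark. Your Fourier-side proof (Paley--Wiener entirety of $\hat u$, analyticity of $G(\xi)=|\xi|^{2s}\hat u(\xi)=\hat f(\xi)+k^{2s}\hat u(\xi)$ at the origin, scaling/degree matching forcing $2s+N\in\Z$, and the irreducibility--parity argument from $|\xi|^2 p_N^2=Q^2$ for the borderline $s=1/2$) is sound and buys something the paper's proof does not: it is elementary and self-contained, avoiding the Carleman-estimate machinery behind the UCP of \cite{GSU20}, and it isolates precisely the non-analyticity of $|\xi|^{2s}$ at $\xi=0$ as the mechanism --- which resonates with the paper's own remark that fixed-frequency techniques hinge on analyticity of $|\xi|^{2s}-k^{2s}$. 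Two small points deserve a sentence in a polished write-up: first, since the paper defines $(-\Delta)^s$ by duality against $\mathcal S_0(\R^d)$, the pointwise identity $(|\xi|^{2s}-k^{2s})\hat u=\hat f$ holds a priori only up to a distribution supported at the origin (the annihilator of $\mathcal F\mathcal S_0$), which is ruled out because both sides are locally integrable; second, your parity argument uses irreducibility of $\xi_1^2+\cdots+\xi_d^2$ in $\R[\xi_1,\dots,\xi_d]$, valid for $d\ge 2$ (harmless here, as the paper works with $d\ge 3$, but the one-dimensional case would need a separate, easy line). Note also that both proofs actually yield the stronger conclusion $u\equiv 0$, not merely $f\equiv 0$.
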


\begin{proof}
	The case $s = 1$ is the classical Rellich's lemma, whose proof can be found in e.g.~\cite{GE}*{Lemma 35.2}.
	
	For the case $0 < s < 1$, we assume $f$ is non-zero.
	Then if $u$ is compactly supported, then we can find a non-empty domain $W$ such that $u = f = 0$ in $W$, and due to $(-\Delta)^s u = k^{2s} u + f$, we can have $(-\Delta)^s u = 0$ in $W$, too.
	Therefore, by \cite{GSU20}*{Theorem 1.2} we conclude $u \equiv 0$ in $\mathbb R^d$, and thus $f \equiv 0$.
	We meet a contradiction.
	This indicates $u$ cannot be compactly supported.
	We can even conclude that $u$ cannot be equal to zero in any bounded domain contained in $\supp f$.
	The proof is done.
\end{proof}

In the following part, for a given non-zero, compactly supported, $L^2$-integrable function $f$, by supposing \textit{a priori} there is a solution $u \in L^2(\R^d)$ of \eqref{eqn1},
we can construct such $u$.

\smallskip

Let us take the case $s=\frac{1}{2}$ first. Taking the Fourier transform in \eqref{eqn1} gives
\begin{equation*} 
	(|\xi|-k)\widehat{u}(\xi) = \widehat{f}(\xi).
\end{equation*}
Define $w \in H^1(\mathbb{R}^d)$ solving the following elliptic equation with the source term $u\in L^2(\R^d)$:
\begin{equation} \label{eqn6}
	(-\Delta)^{\frac{1}{2}}w + kw =u \quad \mbox{in} \quad \mathbb{R}^d, \quad k>0.
\end{equation}
Using the Fourier transform, $w$ can be solved as
\(
w = \mathcal F^{-1} \big\{(|\xi| + k)^{-1} \widehat{u}(\xi) \big\}.
\)
This immediately shows $\widehat{w}\in L^2(\R^d)$ and $|\xi| \widehat{w}(\xi) \in L^2(\R^d)$, i.e.~$w \in H^1(\mathbb{R}^d)$, thanks to our hypothesis $u\in L^2$.
Next, basic algebra shows  
\begin{equation}
	\label{eqn4}
	(|\xi|^2-k^2)\widehat{w}(\xi)
	= (|\xi|-k)\widehat{u}(\xi)
	= \widehat{f}(\xi),
\end{equation}
which gives 
\begin{equation} \label{w-eqn}
	(-\Delta - k^2) w = f \quad \mbox{in} \quad \mathbb{R}^d.
\end{equation}
The Helmholtz equation \eqref{w-eqn} can be solved using standard resolvent arguments.
Then we can obtain $u$ using \eqref{eqn6}.

%
%
%

\smallskip

For $s\neq \frac{1}{2},$ we define \begin{equation} \label{eqn7}
	\widehat{w}(\xi) = \frac{|\xi|^{2s}-k^{2s}}{|\xi|^2-k^2}\,\widehat{u}(\xi).
\end{equation}
L'H\^opital's rule shows
\[
\lim_{|\xi| \to k} \frac {|\xi|^{2s}-k^{2s}} {|\xi|^2-k^2}
= sk^{2s-2} > 0,
\]
so $\frac{|\xi|^{2s}-k^{2s}}{|\xi|^2-k^2} \neq 0$ for all $\xi\in\mathbb{R}^d$.
In fact, $\frac{|\xi|^{2s}-k^{2s}}{|\xi|^2-k^2} > 0$ due to continuity, and it is a bounded function with respect to $\xi$, i.e.~$\frac {|\xi|^{2s} - k^{2s}} {|\xi|^2 - k^2} \in L^\infty(\R^d)$.
Now using our hypothesis $u \in L^2(\R^d)$, the relation \eqref{eqn7} implies $w \in L^2(\mathbb{R}^d)$, and it solves
\begin{equation} \label{w-eqnn}
	(-\Delta - k^2) w = f \quad \mbox{in} \quad \mathbb{R}^d.
\end{equation}
Solving the Helmholtz equation \eqref{w-eqnn} gives us $w \in H^2(\R^d)$.
Fix a cutoff function $\chi \in C^\infty_c(\R^d)$ with $\chi(\xi) = 1$ near $|\xi| = k$, then \eqref{eqn7} implies
\begin{equation}
	\widehat{u}(\xi)
	= \underbrace{\chi(\xi) \frac {|\xi|^2-k^2} {|\xi|^{2s} -k^{2s}}}_{\in L^\infty} \, \underbrace{\widehat{w}(\xi)}_{\in L^2} + \underbrace{(1-\chi(\xi)) \frac {1 - k^2/|\xi|^2} {|\xi|^{2s}-k^{2s}}}_{\in L^\infty} \, \underbrace{|\xi|^2 \widehat{w}(\xi)}_{\in L^2}.
\end{equation}
We see $u\in L^2(\R^d)$.
The construction is done.

\end{appendix}


{

\begin{bibdiv}
	\begin{biblist}
		
		\bib{AgSh}{article}{
			author={Agmon, Shmuel},
			title={Spectral properties of {S}chr\"{o}dinger operators and scattering
				theory},
			date={1975},
			ISSN={0391-173X},
			journal={Ann. Scuola Norm. Sup. Pisa Cl. Sci. (4)},
			volume={2},
			number={2},
			pages={151\ndash 218},
			url={http://www.numdam.org/item?id=ASNSP_1975_4_2_2_151_0},
			review={\MR{397194}},
		}
		
		\bib{barcelo2020uniqueness}{article}{
			author={Barcel{\'o}, Juan~Antonio},
			author={Castro, Carlos},
			author={Luque, Teresa},
			author={Mero{\~n}o, Cristobal~J.},
			author={Ruiz, Alberto},
			author={de~la Cruz~Vilela, Mar{\'{\i}}a},
			title={Uniqueness for the inverse fixed angle scattering problem},
			date={2020},
			ISSN={0928-0219},
			journal={J. Inverse Ill-Posed Probl.},
			volume={28},
			number={4},
			pages={465\ndash 470},
		}
		
		\bib{bhattacharyya2021inverse}{article}{
			author={Bhattacharyya, S.},
			author={Ghosh, T.},
			author={Uhlmann, G.},
			title={Inverse problems for the fractional-{L}aplacian with lower order
				non-local perturbations},
			date={2021},
			ISSN={0002-9947,1088-6850},
			journal={Trans. Amer. Math. Soc.},
			volume={374},
			number={5},
			pages={3053\ndash 3075},
			url={https://doi.org/10.1090/tran/8151},
			review={\MR{4237942}},
		}
		
		\bib{borg1946umkehrung}{article}{
			author={Borg, G\"oran},
			title={Eine {U}mkehrung der {S}turm-{L}iouvilleschen {E}igenwertaufgabe.
				{B}estimmung der {D}ifferentialgleichung durch die {E}igenwerte},
			date={1946},
			ISSN={0001-5962,1871-2509},
			journal={Acta Math.},
			volume={78},
			pages={1\ndash 96},
			url={https://doi.org/10.1007/BF02421600},
			review={\MR{15185}},
		}
		
		\bib{BSW}{book}{
			author={B\"{o}ttcher, Bj\"{o}rn},
			author={Schilling, Ren\'{e}},
			author={Wang, Jian},
			title={L\'{e}vy matters. {III}},
			series={Lecture Notes in Mathematics},
			publisher={Springer, Cham},
			date={2013},
			volume={2099},
			ISBN={978-3-319-02683-1; 978-3-319-02684-8},
			url={https://doi.org/10.1007/978-3-319-02684-8},
			note={L\'{e}vy-type processes: construction, approximation and sample
				path properties, With a short biography of Paul L\'{e}vy by Jean Jacod,
				L\'{e}vy Matters},
			review={\MR{3156646}},
		}
		
		\bib{biccari2025boundary}{article}{
			author={Biccari, Umberto},
			author={Warma, Mahamadi},
			author={Zuazua, Enrique},
			title={Boundary observation and control for fractional heat and wave
				equations},
			date={2025},
			journal={arXiv preprint arXiv:2504.17413},
		}
		
		\bib{M}{book}{
			author={Carracedo, Selso~M.},
			author={Alix, Miguel~S.},
			title={The theory of fractional powers of operators},
			series={North-Holland Mathematics Studies},
			publisher={North-Holland Publishing Co.},
			address={Amsterdam},
			date={1995},
			volume={187},
			ISBN={0-444-82074-3},
			review={\MR{1321446}},
		}
		
		\bib{cakoni2006qualitative}{book}{
			author={Cakoni, Fioralba},
			author={Colton, David},
			title={Qualitative methods in inverse scattering theory. {An}
				introduction},
			series={Interact. Mech. Math.},
			publisher={Berlin: Springer},
			date={2006},
			ISBN={3-540-28844-9},
		}
		
		\bib{colton1998inverse}{book}{
			author={Colton, David},
			author={Kress, Rainer},
			title={Inverse acoustic and electromagnetic scattering theory},
			edition={4th expanded edition},
			series={Appl. Math. Sci.},
			publisher={Cham: Springer},
			date={2019},
			volume={93},
			ISBN={978-3-030-30350-1; 978-3-030-30351-8},
		}
		
		\bib{CS07}{article}{
			author={Caffarelli, Luis},
			author={Silvestre, Luis},
			title={An extension problem related to the fractional {L}aplacian},
			date={2007},
			ISSN={0360-5302},
			journal={Comm. Partial Differential Equations},
			volume={32},
			number={7-9},
			pages={1245\ndash 1260},
			url={https://doi.org/10.1080/03605300600987306},
			review={\MR{2354493}},
		}
		
		\bib{das2025fractional}{article}{
			author={Das, Saumyajit},
			author={Ghosh, Tuhin},
			title={Fractional Borg-Levinson problem with small non-negative
				potential of small growth},
			date={2025},
			journal={arXiv preprint arXiv:2508.06998},
		}
		
		\bib{deift1979inverse}{article}{
			author={Deift, P.},
			author={Trubowitz, E.},
			title={Inverse scattering on the line},
			date={1979},
			ISSN={0010-3640,1097-0312},
			journal={Comm. Pure Appl. Math.},
			volume={32},
			number={2},
			pages={121\ndash 251},
			url={https://doi.org/10.1002/cpa.3160320202},
			review={\MR{512420}},
		}
		
		\bib{eskin1992inverse}{incollection}{
			author={Eskin, G.},
			author={Ralston, J.},
			title={Inverse backscattering},
			date={1992},
			volume={58},
			pages={177\ndash 190},
			url={https://doi.org/10.1007/BF02790363},
			note={Festschrift on the occasion of the 70th birthday of Shmuel
				Agmon},
			review={\MR{1226942}},
		}
		
		\bib{ER95}{article}{
			author={Eskin, G.},
			author={Ralston, J.},
			title={Inverse scattering problem for the {S}chr\"odinger equation with
				magnetic potential at a fixed energy},
			date={1995},
			ISSN={0010-3616,1432-0916},
			journal={Comm. Math. Phys.},
			volume={173},
			number={1},
			pages={199\ndash 224},
			url={http://projecteuclid.org/euclid.cmp/1104274526},
			review={\MR{1355624}},
		}
		
		\bib{GE}{book}{
			author={Eskin, Gregory},
			title={Lectures on linear partial differential equations},
			series={Graduate Studies in Mathematics},
			publisher={American Mathematical Society, Providence, RI},
			date={2011},
			volume={123},
			ISBN={978-0-8218-5284-2},
			url={https://doi.org/10.1090/gsm/123},
			review={\MR{2809923}},
		}
		
		\bib{gel1951determination}{article}{
			author={Gel\cprime~fand, I.~M.},
			author={Levitan, B.~M.},
			title={On the determination of a differential equation by its spectral
				function},
			date={1951},
			journal={Doklady Akad. Nauk SSSR (N.S.)},
			volume={77},
			pages={557\ndash 560},
			review={\MR{43315}},
		}
		
		\bib{ghosh2021non}{article}{
			author={Ghosh, Tuhin},
			title={A non-local inverse problem with boundary response},
			date={2022},
			ISSN={0213-2230,2235-0616},
			journal={Rev. Mat. Iberoam.},
			volume={38},
			number={6},
			pages={2011\ndash 2032},
			url={https://doi.org/10.4171/RMI/1323},
			review={\MR{4516180}},
		}
		
		\bib{GRSU20}{article}{
			author={Ghosh, Tuhin},
			author={R\"{u}land, Angkana},
			author={Salo, Mikko},
			author={Uhlmann, Gunther},
			title={Uniqueness and reconstruction for the fractional {C}alder\'{o}n
				problem with a single measurement},
			date={2020},
			ISSN={0022-1236},
			journal={J. Funct. Anal.},
			volume={279},
			number={1},
			pages={108505, 42},
			url={https://doi.org/10.1016/j.jfa.2020.108505},
			review={\MR{4083776}},
		}
		
		\bib{grubb2015fractional}{article}{
			author={Grubb, Gerd},
			title={Fractional {L}aplacians on domains, a development of
				{H}\"ormander's theory of {$\mu$}-transmission pseudodifferential operators},
			date={2015},
			ISSN={0001-8708,1090-2082},
			journal={Adv. Math.},
			volume={268},
			pages={478\ndash 528},
			url={https://doi.org/10.1016/j.aim.2014.09.018},
			review={\MR{3276603}},
		}
		
		\bib{grubb2018green}{article}{
			author={Grubb, Gerd},
			title={Green's formula and a {D}irichlet-to-{N}eumann operator for
				fractional-order pseudodifferential operators},
			date={2018},
			ISSN={0360-5302,1532-4133},
			journal={Comm. Partial Differential Equations},
			volume={43},
			number={5},
			pages={750\ndash 789},
			url={https://doi.org/10.1080/03605302.2018.1475487},
			review={\MR{3920522}},
		}
		
		\bib{GoSc}{article}{
			author={Goldberg, M.},
			author={Schlag, W.},
			title={A limiting absorption principle for the three-dimensional
				{S}chr\"{o}dinger equation with {$L^p$} potentials},
			date={2004},
			ISSN={1073-7928},
			journal={Int. Math. Res. Not.},
			number={75},
			pages={4049\ndash 4071},
			url={https://doi.org/10.1155/S1073792804140324},
			review={\MR{2112327}},
		}
		
		\bib{GSU20}{article}{
			author={Ghosh, Tuhin},
			author={Salo, Mikko},
			author={Uhlmann, Gunther},
			title={The {C}alder\'{o}n problem for the fractional {S}chr\"{o}dinger
				equation},
			date={2020},
			ISSN={2157-5045},
			journal={Anal. PDE},
			volume={13},
			number={2},
			pages={455\ndash 475},
			url={https://doi.org/10.2140/apde.2020.13.455},
			review={\MR{4078233}},
		}
		
		\bib{GU2021calder}{article}{
			author={Ghosh, Tuhin},
			author={Uhlmann, Gunther},
			title={The {C}alder\'{o}n problem for nonlocal operators},
			date={2021},
			journal={arXiv preprint arXiv:2110.09265},
		}
		
		\bib{GuSu}{article}{
			author={Guti\'{e}rrez, Susana},
			title={Non trivial {$L^q$} solutions to the {G}inzburg-{L}andau
				equation},
			date={2004},
			ISSN={0025-5831},
			journal={Math. Ann.},
			volume={328},
			number={1-2},
			pages={1\ndash 25},
			url={https://doi.org/10.1007/s00208-003-0444-7},
			review={\MR{2030368}},
		}
		
		\bib{HYZ}{article}{
			author={Huang, Shanlin},
			author={Yao, Xiaohua},
			author={Zheng, Quan},
			title={Remarks on {$L^p$}-limiting absorption principle of
				{S}chr\"{o}dinger operators and applications to spectral multiplier
				theorems},
			date={2018},
			ISSN={0933-7741},
			journal={Forum Math.},
			volume={30},
			number={1},
			pages={43\ndash 55},
			url={https://doi.org/10.1515/forum-2016-0162},
			review={\MR{3739326}},
		}
		
		\bib{isozaki2014recent}{incollection}{
			author={Isozaki, Hiroshi},
			author={Kurylev, Yaroslav},
			author={Lassas, Matti},
			title={Recent progress of inverse scattering theory on non-compact
				manifolds},
			date={2014},
			booktitle={Inverse problems and applications},
			series={Contemp. Math.},
			volume={615},
			publisher={Amer. Math. Soc., Providence, RI},
			pages={143\ndash 163},
			url={https://doi.org/10.1090/conm/615/12290},
			review={\MR{3221603}},
		}
		
		\bib{MR4058699}{article}{
			author={Ishida, Atsuhide},
			title={Propagation property and application to inverse scattering for
				fractional powers of negative {L}aplacian},
			date={2020},
			ISSN={2079-7362,2079-7370},
			journal={East Asian J. Appl. Math.},
			volume={10},
			number={1},
			pages={106\ndash 122},
			url={https://doi.org/10.4208/eajam.050319.110619},
			review={\MR{4058699}},
		}
		
		\bib{kitada2010scattering}{article}{
			author={Kitada, Hitoshi},
			title={Scattering theory for the fractional power of negative
				{L}aplacian},
			date={2010},
			ISSN={2158-611X},
			journal={J. Abstr. Differ. Equ. Appl.},
			volume={1},
			number={1},
			pages={1\ndash 26},
			review={\MR{2747652}},
		}
		
		\bib{kitada2011remark}{article}{
			author={Kitada, Hitoshi},
			title={A remark on simple scattering theory},
			date={2011},
			ISSN={1938-9787},
			journal={Commun. Math. Anal.},
			volume={11},
			number={2},
			pages={124\ndash 138},
			review={\MR{2780885}},
		}
		
		\bib{KRS}{article}{
			author={Kenig, C.~E.},
			author={Ruiz, A.},
			author={Sogge, C.~D.},
			title={Uniform {S}obolev inequalities and unique continuation for second
				order constant coefficient differential operators},
			date={1987},
			ISSN={0012-7094},
			journal={Duke Math. J.},
			volume={55},
			number={2},
			pages={329\ndash 347},
			url={https://doi.org/10.1215/S0012-7094-87-05518-9},
			review={\MR{894584}},
		}
		
		\bib{kwasnicki2017ten}{article}{
			author={Kwa\'snicki, Mateusz},
			title={Ten equivalent definitions of the fractional {L}aplace operator},
			date={2017},
			ISSN={1311-0454,1314-2224},
			journal={Fract. Calc. Appl. Anal.},
			volume={20},
			number={1},
			pages={7\ndash 51},
			url={https://doi.org/10.1515/fca-2017-0002},
			review={\MR{3613319}},
		}
		
		\bib{lax2014functional}{book}{
			author={Lax, Peter~D.},
			title={Functional analysis},
			series={Pure and Applied Mathematics (New York)},
			publisher={Wiley-Interscience, John Wiley \& Sons, Inc.},
			address={New York},
			date={2002},
			volume={55},
			ISBN={0-471-55604-1},
			review={\MR{1892228}},
		}
		
		\bib{levinson1949inverse}{article}{
			author={Levinson, Norman},
			title={The inverse {S}turm-{L}iouville problem},
			date={1949},
			ISSN={0909-3540},
			journal={Mat. Tidsskr. B},
			volume={1949},
			pages={25\ndash 30},
			review={\MR{32067}},
		}
		
		\bib{LLM21}{article}{
			author={Li, Jingzhi},
			author={Liu, Hongyu},
			author={Ma, Shiqi},
			title={Determining a random {S}chr\"{o}dinger operator: both potential
				and source are random},
			date={2021},
			ISSN={0010-3616},
			journal={Comm. Math. Phys.},
			volume={381},
			number={2},
			pages={527\ndash 556},
			url={https://doi.org/10.1007/s00220-020-03889-9},
			review={\MR{4207450}},
		}
		
		\bib{liu2015determining}{article}{
			author={Liu, Hongyu},
			author={Uhlmann, Gunther},
			title={Determining both sound speed and internal source in thermo- and
				photo-acoustic tomography},
			date={2015},
			ISSN={0266-5611,1361-6420},
			journal={Inverse Problems},
			volume={31},
			number={10},
			pages={105005, 10},
			url={https://doi.org/10.1088/0266-5611/31/10/105005},
			review={\MR{3405365}},
		}
		
		\bib{marchenko2011sturm}{book}{
			author={Marchenko, V.~A.},
			title={Sturm-liouville operators and applications},
			series={Operator Theory: Advances and Applications},
			publisher={Birkh{\"a}user Verlag},
			address={Basel},
			date={1986},
			volume={22},
			ISBN={3-7643-1688-8},
			review={\MR{0481179}},
		}
		
		\bib{nachman1992inverse}{incollection}{
			author={Nachman, Adrian~I.},
			title={Inverse scattering at fixed energy},
			date={1992},
			booktitle={Mathematical physics, {X} ({L}eipzig, 1991)},
			publisher={Springer, Berlin},
			pages={434\ndash 441},
			url={https://doi.org/10.1007/978-3-642-77303-7_48},
			review={\MR{1386440}},
		}
		
		\bib{novikov1988multidimensional}{article}{
			author={Novikov, R.~G.},
			title={A multidimensional inverse spectral problem for the equation
				{$-\Delta\psi +(v(x)-Eu(x))\psi=0$}},
			date={1988},
			ISSN={0374-1990},
			journal={Funktsional. Anal. i Prilozhen.},
			volume={22},
			number={4},
			pages={11\ndash 22, 96},
			url={https://doi.org/10.1007/BF01077418},
			review={\MR{976992}},
		}
		
		\bib{nachman1988n}{article}{
			author={Nachman, Adrian},
			author={Sylvester, John},
			author={Uhlmann, Gunther},
			title={An {$n$}-dimensional {B}org-{L}evinson theorem},
			date={1988},
			ISSN={0010-3616,1432-0916},
			journal={Comm. Math. Phys.},
			volume={115},
			number={4},
			pages={595\ndash 605},
			url={http://projecteuclid.org/euclid.cmp/1104161086},
			review={\MR{933457}},
		}
		
		\bib{ReFr}{article}{
			author={Rellich, Franz},
			title={\"{U}ber das asymptotische {V}erhalten der {L}\"{o}sungen von
				{$\Delta u+\lambda u=0$} in unendlichen {G}ebieten},
			date={1943},
			ISSN={0012-0456},
			journal={Jber. Deutsch. Math.-Verein.},
			volume={53},
			pages={57\ndash 65},
			review={\MR{17816}},
		}
		
		\bib{MR4170189}{article}{
			author={Rakesh},
			author={Salo, Mikko},
			title={Fixed angle inverse scattering for almost symmetric or controlled
				perturbations},
			date={2020},
			ISSN={0036-1410,1095-7154},
			journal={SIAM J. Math. Anal.},
			volume={52},
			number={6},
			pages={5467\ndash 5499},
			review={\MR{4170189}},
		}
		
		\bib{Rakesh_2020}{article}{
			author={Rakesh},
			author={Salo, Mikko},
			title={The fixed angle scattering problem and wave equation inverse
				problems with two measurements},
			date={2020},
			ISSN={0266-5611,1361-6420},
			journal={Inverse Problems},
			volume={36},
			number={3},
			pages={035005, 42},
			url={https://doi.org/10.1088/1361-6420/ab23a2},
			review={\MR{4068234}},
		}
		
		\bib{uhlmann2014uniqueness}{article}{
			author={Rakesh},
			author={Uhlmann, Gunther},
			title={Uniqueness for the inverse backscattering problem for angularly
				controlled potentials},
			date={2014},
			ISSN={0266-5611,1361-6420},
			journal={Inverse Problems},
			volume={30},
			number={6},
			pages={065005, 24},
			url={https://doi.org/10.1088/0266-5611/30/6/065005},
			review={\MR{3224125}},
		}
		
		\bib{MR3372472}{incollection}{
			author={Rakesh},
			author={Uhlmann, Gunther},
			title={The point source inverse back-scattering problem},
			date={2015},
			booktitle={Analysis, complex geometry, and mathematical physics: in honor of
				{D}uong {H}. {P}hong},
			series={Contemp. Math.},
			volume={644},
			publisher={Amer. Math. Soc., Providence, RI},
			pages={279\ndash 289},
			review={\MR{3372472}},
		}
		
		\bib{soccorsi:hal-03571903}{incollection}{
			author={Soccorsi, \'Eric},
			title={Multidimensional {B}org-{L}evinson inverse spectral problems},
			date={2020},
			booktitle={Identification and control: some new challenges},
			series={Contemp. Math.},
			volume={757},
			publisher={Amer. Math. Soc., [Providence], RI},
			pages={19\ndash 49},
			url={https://doi.org/10.1090/conm/757/15248},
			review={\MR{4186958}},
		}
		
		\bib{S}{book}{
			author={Stein, Elias~M.},
			title={Singular integrals and differentiability properties of
				functions},
			series={Princeton Mathematical Series, No. 30},
			publisher={Princeton University Press, Princeton, N.J.},
			date={1970},
			review={\MR{0290095}},
		}
		
		\bib{uhlmann2000inverse}{incollection}{
			author={Uhlmann, Gunther},
			title={Inverse scattering in anisotropic media},
			date={2000},
			booktitle={Surveys on solution methods for inverse problems},
			publisher={Springer, Vienna},
			pages={235\ndash 251},
			review={\MR{1766746}},
		}
		
		\bib{uhlmann2001time}{incollection}{
			author={Uhlmann, Gunther},
			title={A time-dependent approach to the inverse backscattering problem},
			date={2001},
			volume={17},
			pages={703\ndash 716},
			url={https://doi.org/10.1088/0266-5611/17/4/309},
			note={Special issue to celebrate Pierre Sabatier's 65th birthday
				(Montpellier, 2000)},
			review={\MR{1861477}},
		}
		
		\bib{uhlmann2025recovering}{article}{
			author={Uhlmann, Gunther},
			author={Wang, Yiran},
			title={Recovering asymptotics of potentials from the scattering of
				relativistic {S}chr\"{o}dinger operators},
			date={2025},
			journal={arXiv preprint arXiv:2508.12463},
		}
		
		\bib{Zilberberg2026}{article}{
			author={Zilberberg, Dana},
			author={Cakoni, Fioralba},
			author={Vogelius, Michael~S.},
			title={Limiting absorption principle and radiation condition for the
				fractional {H}elmholtz equation},
			date={2026},
			journal={arXiv preprint arXiv:2602.18387},
		}
		
	\end{biblist}
\end{bibdiv}

}

\end{document}